\DeclareMathOperator{\im}{im}
\newcommand{\NN}{\mathbb{N}}
\newcommand{\RR}{\mathbb{R}}
\newcommand{\ZZ}{\mathbb{Z}}
\newcommand{\PM}{\mathcal{P}M}
\newcommand{\wgt}{\mathrm{wgt}}
\newcommand{\zcl}{\mathrm{zcl}}
\newcommand{\TC}{\mathsf{TC}}
\newcommand{\cat}{\mathsf{cat}}
\newcommand{\secat}{\mathsf{secat}}
\theoremstyle{plain}
\newtheorem{theorem}{Theorem}[section]
\newtheorem{prop}[theorem]{Proposition}
\newtheorem{lemma}[theorem]{Lemma}
\newtheorem{cor}[theorem]{Corollary}
\theoremstyle{definition}
\newtheorem{definition}[theorem]{Definition}
\theoremstyle{remark}
\newtheorem{example}[theorem]{Example}
\begin{document}
\setcounter{tocdepth}{1}

\title{Topological complexity of symplectic manifolds}
\author{Mark Grant}
\address{Institute of Pure and Applied Mathematics \\ University of Aberdeen \\ Fraser Noble Building  \\ Aberdeen AB24 3UE \\ United Kingdom}
\email{mark.grant@abdn.ac.uk}

\author{Stephan Mescher}
\address{Mathematisches Institut \\ Universit\"at Leipzig \\ Augustusplatz 10 \\ 04109 Leipzig \\ Germany}
\email{mescher@math.uni-leipzig.de}

\thanks{The authors wish to thank Ay\c se Borat, Michael Farber, Jarek K\c edra, and John Oprea for helpful comments regarding an earlier draft of this paper.}

\date{\today}

\begin{abstract}
We prove that the topological complexity of every symplectically atoroidal manifold is equal to twice its dimension. This is the analogue for topological complexity of a result of Rudyak and Oprea, who showed that the Lusternik--Schnirelmann category of a symplectically aspherical manifold equals its dimension. Symplectically hyperbolic manifolds are symplectically atoroidal, as are symplectically aspherical manifolds whose fundamental group does not contain free abelian subgroups of rank two. Thus we obtain many new calculations of topological complexity, including iterated surface bundles and symplectically aspherical manifolds with hyperbolic fundamental groups.

\end{abstract}

\maketitle

\section{Introduction}

Topological complexity is a numerical homotopy invariant, originally defined by M.\ Farber in \cite{FarberTC} and motivated by the motion planning problem from robotics. Let $X$ be a topological space, and let $PX=C^0([0,1],X)$ be the space of paths in $X$ endowed with the compact-open topology. Consider the \emph{free path fibration} $\pi: PX\to X \times X$ given by $\gamma \mapsto (\gamma(0),\gamma(1))$. The \emph{topological complexity of $X$}, denoted $\TC(X)$, is the minimal number $k$ (or infinity) for which there exists an open cover $\{U_0,U_1,\dots,U_k\}$ of $X \times X$ with the property that for each $j \in \{0,1,\dots,k\}$ there exists a continuous map $s_j: U_j \to PX$ with $\pi \circ s_j = \operatorname{incl}_j:U_j\hookrightarrow X\times X$. (Note that we use the \emph{reduced} topological complexity, which is one less than Farber's original definition.) Due to its potential applicability and intrinsic interest, this invariant has attracted a lot of attention from homotopy theorists in recent years. Overviews of its main properties and applications are given in \cite{FarberSurveyTC} and \cite[Chapter 4]{FarberBook}.

The invariant $\TC(X)$ is similar in spirit and properties to the classical \emph{Lusternik--Schnirelmann category} $\cat(X)$---the minimal number $k$ for which there exists an open cover $\{U_0, U_1, \ldots , U_k\}$ of $X$ such that each inclusion $U_j\hookrightarrow X$ is null-homotopic. (Here again we are using the \emph{reduced} version.) Both invariants possess \emph{upper bounds} in terms of dimension and connectivity. Namely, if $X$ is an $(r-1)$-connected CW complex, then
$$ \cat(X) \leq \dim X/r \qquad \text{and} \qquad \TC(X) \leq 2 \dim X/r.$$
Proofs of these bounds may be found in \cite[Theorem 1.50]{CorneaLS} for $\cat(X)$, and in \cite[Theorem 5.2]{FarberTC2} for $\TC(X)$.

Both invariants also admit elementary \emph{lower bounds} in terms of cup-length in cohomology.
Recall that the \emph{cup-length} of $X$, denoted $\operatorname{cl}(X)$, is the minimal $k$ such that any product of $k+1$ elements of $\widetilde{H}^*(X)$ vanishes. The \emph{zero-divisors cup-length} of $X$, denoted $\operatorname{zcl}(X)$, is the minimal $k$ such that any product of $k+1$ elements in the kernel of the homomorphism $\Delta^*: H^*(X\times X)\to H^*(X)$ induced by the diagonal vanishes. Then we have
$$ \cat(X) \geq \operatorname{cl}(X) \qquad \text{and} \qquad \TC(X) \geq \zcl(X).$$
There are analogous statements with local coefficients. These statements, which may be found in \cite[Proposition 1.5]{CorneaLS} and \cite[Theorem 7]{FarberTC} respectively, are both specializations of \cite[Theorem 4]{SchwarzGenus}. Indeed, both $\cat(X)$ and $\TC(X)$ can be seen as special cases of the notion of \emph{Schwarz genus} (or \emph{sectional category}) of fibrations.

The goal of this article is to give new computations of the topological complexity of symplectic manifolds. Our results will apply more generally to the class of \emph{cohomologically symplectic}, or \emph{$c$-symplectic manifolds} \cite{LuptonOpreaCsymp}. Our convention is that a \emph{$c$-symplectic manifold} is a pair $(M,\omega)$ consisting of a closed manifold $M$ of even dimension $2n$, together with a closed $2$-form $\omega\in \Omega^2(M)$ such that the $n$-th power $[\omega]^n\in H^{2n}(M;\RR)$ of the cohomology class represented by $\omega$ is nonzero. (Choosing a representative of the $c$-symplectic class gives a slight refinement of the definition used in \cite{LuptonOpreaCsymp}, which is unnecessary but simplifies some statements.) In particular, a closed symplectic manifold $(M,\omega)$ is $c$-symplectic. If $M$ is a simply connected $c$-symplectic manifold of dimension $2n$, then the bounds given above (together with a simple cohomology calculation in the case of $\TC$, carried out in \cite{FTY}) imply that
$$\cat(M)=n\qquad\text{and}\qquad \TC(M)=2n.$$

Our interest therefore lies in the non-simply connected case. Here, one can often use the notion of \emph{category weight} of cohomology classes, introduced by E.\ Fadell and S.\ Husseini \cite{FadellHusseini} and later refined by Y.\ Rudyak \cite{RudyakWeight} and J.\ Strom \cite{StromThesis}, in order to improve on the cup-length lower bound for $\cat(M)$. Recall that a ($c$-)symplectic manifold $(M,\omega)$ is called \emph{($c$-)symplectically aspherical} if
$$\int_{S^2} f^*\omega = 0$$
for all smooth maps $f: S^2 \to M$. Using category weight, Rudyak and J.\ Oprea derived the following result, a key tool in Rudyak's proof of the Arnold Conjecture for symplectically aspherical manifolds.

\begin{theorem}[{\cite[Corollary 4.2]{RudyakOprea}}]
\label{ThmRO}
Let $(M,\omega)$ be a $c$-symplectically aspherical manifold. Then $\cat(M) = \dim M$. 	
\end{theorem}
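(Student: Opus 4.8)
The plan is to prove the two inequalities $\cat(M)\le\dim M$ and $\cat(M)\ge\dim M$ separately. The upper bound needs no hypothesis: a closed manifold has the homotopy type of a finite CW complex of dimension $\dim M$, so path-connectedness together with the dimension--connectivity bound recalled above (taking $r=1$) gives $\cat(M)\le\dim M$. Write $2n=\dim M$. The naive cup-length estimate $\cat(M)\ge\operatorname{cl}(M)\ge n$, which uses only that $[\omega]^n\ne 0$ in $H^{2n}(M;\RR)$, falls short by a factor of two, and the content of the theorem is that \emph{category weight} restores it. Assuming for the moment that the $c$-symplectic class satisfies $\wgt([\omega])\ge 2$, super-additivity of category weight under cup products gives $\wgt([\omega]^n)\ge n\,\wgt([\omega])\ge 2n$; since $[\omega]^n\ne 0$, the lower bound $\cat(M)\ge\wgt([\omega]^n)$ then yields $\cat(M)\ge 2n=\dim M$, and combined with the upper bound this is the assertion.

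It remains to prove $\wgt([\omega])\ge 2$, and this is where $c$-symplectic asphericity enters. Let $p\colon\widetilde M\to M$ be the universal cover and $f\colon M\to B\pi$ the classifying map, where $\pi:=\pi_1(M)$. Since $\widetilde M$ is simply connected, the Hurewicz theorem and universal coefficients over $\RR$ identify $H^2(\widetilde M;\RR)$ with $\Hom(\pi_2(M),\RR)$ so that $p^*[\omega]$ becomes the linear functional $\alpha\mapsto\int_{S^2}f_\alpha^*\omega$, where $f_\alpha\colon S^2\to M$ represents $\alpha$; by hypothesis this functional vanishes, so $p^*[\omega]=0$. Thus $[\omega]$ restricts trivially to the fibre of the fibration $\widetilde M\to M\to B\pi$, and since that fibre is simply connected the off-diagonal term $E_2^{1,1}$ of the associated Serre spectral sequence vanishes; it follows that $[\omega]$ lies in the image of $f^*\colon H^2(B\pi;\RR)\to H^2(M;\RR)$, say $[\omega]=f^*\beta$. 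By naturality of category weight, $\wgt([\omega])\ge\wgt(\beta)$, so it is enough to know that every cohomology class of $B\pi$ of degree at least $2$ has category weight at least $2$. This holds because $\Omega(B\pi)$ is homotopy equivalent to the discrete set $\pi$: the first Ganea fibration of $B\pi$ has total space homotopy equivalent to $\Sigma\pi$, whose reduced cohomology is concentrated in degree $1$, so $\beta$ pulls back to $0$ there. (Equivalently, the cohomology suspension $H^{k}(B\pi;\RR)\to H^{k-1}(\Omega B\pi;\RR)$ is the zero map for $k\ge 2$.)

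The main obstacle is precisely this estimate $\wgt([\omega])\ge 2$. Once one has a notion of category weight possessing the standard properties---naturality under maps, super-additivity for cup products, and the lower bound $\cat(X)\ge\wgt(u)$ for nonzero $u\in\widetilde H^{*}(X)$---the decisive step is the observation that $c$-symplectic asphericity pushes $[\omega]$ down to the classifying space $B\pi$, where every class of degree at least two automatically has weight at least two. The remaining ingredients (the dimensional upper bound, multiplicativity, and the final comparison) are formal.
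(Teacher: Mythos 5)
Your argument is correct, and it is essentially the proof of Rudyak--Oprea that the paper cites rather than reproves: asphericity pushes $[\omega]$ down to $H^2(B\pi_1(M);\RR)$ via the Serre spectral sequence, classes of degree at least two pulled back from an aspherious classifying space have strict category weight at least $2$ because $G_1(B\pi)\simeq\Sigma\Omega B\pi\simeq\Sigma\pi$ has cohomology concentrated in degree one, and super-additivity of weight applied to $[\omega]^n\neq 0$ gives $\cat(M)\geq 2n$. This is also precisely the template the paper adapts for $\TC$ (replacing the Ganea fibration by the fibrewise join $P_2M$ and asphericity by atoroidality), so no further comment is needed.
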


Hence for $c$-symplectically aspherical manifolds, the dimensional upper bound for $\cat(M)$ is attained. The analogous statement for topological complexity (that $\TC(M)=2 \dim M$ when $M$ is $c$-symplectically aspherical) does not hold. For example, the torus $T^2=S^1 \times S^1$ equipped with its standard volume form is aspherical, therefore symplectically aspherical. However, as shown in \cite{FarberTC}, we have $\TC(T^2)=2 < 4=2\dim T^2$.

Our main result identifies an extra condition which ensures that the topological complexity of a $c$-symplectically aspherical manifold achieves its dimensional upper bound, thereby giving an analogue of Theorem \ref{ThmRO} for topological complexity. We say that a ($c$-)symplectic manifold $(M,\omega)$ is \emph{($c$-)symplectically atoroidal} if
$$	\int_{T^2} f^*\omega = 0$$
	for all smooth maps $f:T^2\to M$.

\begin{theorem}
\label{ThmAtor}
	Let $(M,\omega)$ be a $c$-symplectically atoroidal manifold. Then $\TC(M)=2\dim M$.
\end{theorem}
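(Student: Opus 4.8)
The plan is to prove the lower bound $\TC(M) \geq 2\dim M = 4n$ (the upper bound being the standard dimensional bound recalled in the introduction), and the natural strategy is to mimic the proof of Theorem \ref{ThmRO} but with \emph{$\TC$-weight} (the sectional-category analogue of category weight, as developed for fibrations via Schwarz genus) in place of category weight. Recall that category weight gives $\cat(M) \geq n$ for a $c$-symplectically aspherical $M$ because the symplectic class $u = [\omega] \in H^2(M;\RR)$, while it need not have category weight $\geq 2$, can be replaced by a cohomology class with twisted coefficients coming from the extension $0 \to \RR \to \widetilde{\RR} \to \ZZ\langle\pi\rangle$-ish data detecting $\omega$ on $\pi_2$, so that an associated class $\bar u$ has category weight $\geq 2$ and $\bar u^n$ detects $\dim M$; Rudyak and Oprea carry this out. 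For $\TC$ I want the analogue: produce a zero-divisor class $\bar z \in H^*(M \times M; \text{local system})$ with $\TC$-weight $\geq 2$ whose $(2n)$-th power is nonzero, forcing $\TC(M) \geq 4n$.

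First I would recall the precise definition and basic properties of $\TC$-weight: for a cohomology class $z$ (possibly with local coefficients) in $\ker \Delta^*$, its weight $\wgt(z)$ is the largest $k$ such that $z$ restricts to zero on any open set $U \subseteq M\times M$ admitting $k$ local sections of $\pi$; one has $\TC(M) \geq \wgt(z_1)+\dots+\wgt(z_m)$ whenever $z_1 \cdots z_m \neq 0$, and $\wgt$ behaves well under the obvious functoriality. Second, and this is the crux, I would establish that the atoroidal condition forces a suitable zero-divisor class to have weight at least $2$. The relevant class should be built from the two projections $\operatorname{pr}_1, \operatorname{pr}_2 : M \times M \to M$: the class $\bar z = \operatorname{pr}_1^*\bar u - \operatorname{pr}_2^*\bar u$, where $\bar u$ is the twisted lift of the symplectic class used by Rudyak--Oprea, is a zero-divisor. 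The point is that a single local section of $\pi$ over $U$ is a homotopy (rel domain) between $\operatorname{pr}_1|_U$ and $\operatorname{pr}_2|_U$, so $\operatorname{pr}_1^*\bar u$ and $\operatorname{pr}_2^*\bar u$ agree on $U$ and $\bar z|_U = 0$: that gives $\wgt(\bar z) \geq 1$ always. To push to $\wgt(\bar z)\geq 2$ one needs that over any $U$ with \emph{two} local sections the class $\bar z$ still vanishes; here is exactly where atoroidality (rather than mere asphericity) enters, controlling the contribution of tori mapping into $M\times M$ via the two homotopies glued along the section domain. I would prove a lemma of the shape: \emph{if $(M,\omega)$ is $c$-symplectically atoroidal then the associated zero-divisor class $\bar z$ has $\TC$-weight at least $2$}, arguing by analysing the map $T^2 \to M$ obtained from any compressible $2$-torus built out of a pair of sections, and using $\int_{T^2} f^*\omega = 0$ together with the twisted-coefficient bookkeeping to kill the obstruction.

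Third, I would check that the top power $\bar z^{2n} \in H^{4n}(M\times M; \mathcal{L})$ is nonzero for an appropriate local system $\mathcal{L}$. This is a cohomological computation: modulo the kernel of the map to ordinary real cohomology, $\bar z$ maps to $\operatorname{pr}_1^*u - \operatorname{pr}_2^*u$, and one computes $(\operatorname{pr}_1^*u - \operatorname{pr}_2^*u)^{2n} = \binom{2n}{n}\operatorname{pr}_1^*u^n \cdot \operatorname{pr}_2^*u^n$ up to sign by the binomial theorem (all mixed terms vanish since $u^{n+1} = 0$ as $\dim M = 2n$), and $u^n \neq 0$ by the $c$-symplectic hypothesis, so the product is nonzero in $H^{4n}(M\times M;\RR)$, hence a fortiori $\bar z^{2n}\neq 0$. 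Combining, $\TC(M) \geq \wgt(\bar z^{2n}) \geq 2n \cdot \wgt(\bar z) \geq 4n$ (using subadditivity of weight under products), and with the upper bound $\TC(M) \leq 2\dim M$ we conclude. I expect the main obstacle to be Step two: setting up the twisted coefficient system correctly so that $\bar z$ is genuinely defined as a zero-divisor and its weight can be computed, and in particular proving cleanly that atoroidality — as opposed to asphericity — is exactly what lifts the weight from $1$ to $2$; the torus appears because a region of $X\times X$ with two local sections, when one contracts the fibre of $\pi$ appropriately, naturally produces $2$-tori rather than $2$-spheres mapping into $M$, and making this precise while keeping track of the base point / fundamental group data is the delicate part.
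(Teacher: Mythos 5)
Your outline has the same skeleton as the paper's proof: take the zero-divisor associated to the symplectic class, show atoroidality forces its $\TC$-weight up to $2$, check that its $(2n)$-th power is nonzero (your binomial computation, with only the $k=n$ term surviving because $u^{n+1}=0$ for degree reasons, is exactly the paper's), and conclude $\TC(M)\geq 4n$ by superadditivity of weight. Your geometric intuition for why atoroidality rather than asphericity is the right hypothesis --- that a pair of local sections of $\pi$ over $U$ glues to a family of loops in $M$, so that a $1$-cycle in $U$ produces a torus in $M$ rather than a sphere --- is also precisely the mechanism the paper exploits. However, the central step, the lemma that atoroidality implies $\wgt(\bar z)\geq 2$, is left as a sketch, and that lemma is essentially the entire content of the paper. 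The paper proves it as follows: by Schwarz's theorem, $\wgt(z)\geq 2$ if $p_2^*z=0$, where $p_2:P_2M\to M\times M$ is the fibrewise join of the path fibration with itself; $P_2M$ is the homotopy pushout of $PM \leftarrow LM \rightarrow PM$ (the fiber product of two copies of $PM$ over $M\times M$ being $LM$ --- this is where your ``two sections give loops'' picture is formalized); the Mayer--Vietoris sequence of this pushout shows $p_2^*[\bar\omega]=\delta([\alpha_\omega])$ for an explicit closed $1$-form $\alpha_\omega=r_1^*\beta_\omega-r_2^*\beta_\omega$ on the smooth free loop space, where $\beta_\omega$ is a transgression-type primitive of $\pi^*\bar\omega$ on $\mathcal{P}M$; and finally $\int_{S^1}c^*\alpha_\omega=\int_{T^2}\tilde c^*\omega=0$ by atoroidality, so $[\alpha_\omega]=0$ and hence $p_2^*[\bar\omega]=0$. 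None of this machinery (fibrewise join, Mayer--Vietoris connecting map, de Rham theory on Fr\'echet manifolds of smooth paths) appears in your plan, and the phrase ``twisted-coefficient bookkeeping to kill the obstruction'' does not substitute for it.

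A second, smaller point: the twisted coefficients you import from Rudyak--Oprea are a red herring here. Rudyak and Oprea need a twisted lift of $[\omega]$ because the untwisted class need not have category weight $2$; but for $\TC$-weight the paper shows that the ordinary real class $[\bar\omega]=1\times[\omega]-[\omega]\times 1\in H^2(M\times M;\RR)$ already has weight $\geq 2$ under the atoroidality hypothesis, so no local system is needed. Your own Step 3 only uses the image of $\bar z$ in real cohomology anyway, so carrying an unspecified local system $\mathcal{L}$ through the weight argument adds difficulty (you would have to define $\mathcal{L}$, verify $\bar z$ is a well-defined zero-divisor in $H^*(M\times M;\mathcal{L})$, and redo the weight estimate with coefficients) without buying anything. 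I would call this a genuine gap rather than an alternative route: the strategy is right, but the proof of the key lemma is missing and the proposed route to it is not worked out.
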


 Note that since there is a degree-one map $T^2 \to S^2$, every $c$-symplectically atoroidal manifold is $c$-symplectically aspherical. The torus itself is symplectically aspherical but not symplectically atoroidal, as are many symplectic nilmanifolds or solvmanifolds. J.\ K\c edra has shown that every symplectically hyperbolic manifold is symplectically atoroidal \cite{KedraHyp}, while A.\ Borat has shown that every symplectically aspherical manifold whose fundamental group does not contain subgroups isomorphic to $\ZZ\oplus\ZZ$ is symplectically atoroidal \cite{Borat}. Thus our result applies to many examples, including iterated bundles of higher genus surfaces, symplectic manifolds of negative sectional curvature, and symplectically aspherical manifolds with hyperbolic fundamental group. We mention also that M.\ Brunnbauer and D.\ Kotschick \cite{BrunnKot} have given an example of a closed $4$-manifold which is symplectically atoroidal without being symplectically hyperbolic.

Symplectic atoroidality may be viewed as imposing a condition of asymmetry on a symplectic manifold, in the following sense (we are indebted to John Oprea for the statement and its proof).

\begin{prop}
A symplectically atoroidal manifold $(M,\omega)$ does not admit any non-trivial symplectic $S^1$-actions.
\end{prop}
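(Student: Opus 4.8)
The plan is to argue by contradiction: suppose $(M,\omega)$ is symplectically atoroidal and admits a non-trivial symplectic $S^1$-action, and then produce a torus $f: T^2 \to M$ with $\int_{T^2} f^*\omega \neq 0$. The starting point is to pick a point $p \in M$ whose orbit under the action is non-constant (such a point exists since the action is non-trivial). The orbit map $\sigma: S^1 \to M$, $t \mapsto t \cdot p$, is then a smooth loop in $M$. I would like to extend this loop to a map of the torus by sweeping the orbit along another loop; the natural candidate is to use the flow of the symplectic vector field generating the action, but since that flow merely translates along orbits we need a second, transverse direction. So instead the cleanest construction is this: choose a smooth loop $\alpha: S^1 \to M$ based at $p$ (for instance a small loop, or eventually a suitably chosen one), and define $F: S^1 \times S^1 \to M$ by $F(s,t) = t \cdot \alpha(s)$, i.e.\ act on the loop $\alpha$ by the circle. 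Then $s \mapsto F(s,0) = \alpha(s)$ and $t \mapsto F(0,t) = t\cdot p$ is the orbit $\sigma$.

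The key computation is then to evaluate $\int_{T^2} F^*\omega$ and show one can choose $\alpha$ making it non-zero. Let $X$ denote the vector field generating the $S^1$-action; since the action is symplectic, $\iota_X \omega$ is a closed $1$-form, and in fact (possibly after checking $H^1$-considerations, or simply working with the closed form $\iota_X\omega$ directly) we may compute $\int_{T^2} F^*\omega$ via Stokes/Fubini in the $t$-variable. Concretely, $\frac{\partial F}{\partial t}$ at $(s,t)$ is the value of $X$ at $F(s,t)$, so integrating $\omega$ over the cylinder swept by $\alpha$ as it is dragged around by the flow gives, after integrating out $t$ over the full period, an expression of the form $\int_{S^1} \alpha^*(\iota_X\omega)$ — this is because the form $\iota_X\omega$ is invariant under the flow (Cartan's formula: $\mathcal{L}_X(\iota_X\omega) = \iota_X d\iota_X\omega + d\iota_X\iota_X\omega = 0$ since $d(\iota_X\omega)=0$ and $\iota_X\iota_X\omega = 0$). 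Thus $\int_{T^2} F^*\omega = \int_{S^1} \alpha^*(\iota_X\omega)$, the period of the closed $1$-form $\iota_X\omega$ over the loop $\alpha$.

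It therefore suffices to exhibit a loop $\alpha$ on which $\iota_X\omega$ has non-zero period, i.e.\ to show $[\iota_X\omega] \neq 0$ in $H^1(M;\RR)$. This is the main obstacle, and it is resolved by a standard fact about symplectic circle actions: if $\iota_X\omega$ were exact, say $\iota_X\omega = dH$ for a global function $H$, then the action would be \emph{Hamiltonian}, and in particular $H$ would attain a maximum at some point $q$, where $dH_q = 0$, forcing $X_q = 0$ by non-degeneracy of $\omega$; but more to the point, a Hamiltonian circle action has fixed points, and near a fixed point one can still find an invariant $2$-torus on which $\omega$ integrates non-trivially — alternatively, one argues directly that $\int_{T^2}F^*\omega$ for the orbit cylinder construction using the orbit $\sigma$ itself together with a path realizing a non-trivial value of $H$ is non-zero. (The slickest route: if the action were Hamiltonian with moment map $H$, pick $p$ on a non-constant orbit, and let $\alpha$ be any path from $p$ to a point $q$ with $H(q) \neq H(p)$, closed up using that $H$ is constant on orbits; then the period of $dH$ around the resulting loop can be arranged non-zero.) In any case, the non-triviality of the action guarantees $\iota_X\omega$ represents a non-zero class or the action is Hamiltonian with non-constant moment map, and either way we can find $\alpha$ with $\int_{S^1}\alpha^*(\iota_X\omega) \neq 0$, contradicting $c$-symplectic atoroidality. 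This completes the argument; the one delicate point to nail down carefully is the reduction showing that a non-trivial symplectic $S^1$-action on which \emph{every} torus map integrates to zero would have to have $\iota_X\omega$ trivial on homology \emph{and} be non-Hamiltonian, which is impossible.
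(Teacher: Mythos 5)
Your first step is sound and is in essence a hands-on version of what the paper does: the computation $\int_{T^2}F^*\omega=\int_{S^1}\alpha^*(\iota_X\omega)$ for the swept torus $F(s,t)=t\cdot\alpha(s)$ is exactly the statement that atoroidality forces the flux of the loop of symplectomorphisms generated by the action to vanish, i.e.\ that the action is (c-)Hamiltonian. (The paper phrases this as ``the flux homomorphism is zero'' and cites Lupton--Oprea to conclude the action is $c$-Hamiltonian.) Up to that point the proposal is correct.

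The gap is in how you finish the Hamiltonian case, and it is genuine. If $\iota_X\omega=dH$ is exact, then its period over \emph{every} loop is zero, so the ``closed-up path realizing a non-trivial value of $H$'' cannot have non-zero period of $dH$ --- this contradicts basic Stokes, not the hypotheses. Likewise, the claim that near a fixed point one can find an invariant $2$-torus on which $\omega$ integrates non-trivially is unsubstantiated, and in the local model it is false: in $\CC^2$ with the standard rotation action, the invariant tori $S^1(r_1)\times S^1(r_2)$ are Lagrangian, so $\omega$ integrates to zero over them. So your construction produces no contradiction once the action is Hamiltonian, and some genuinely new input is required. The paper supplies it from Lupton--Oprea: a $c$-Hamiltonian $S^1$-action has a fixed point, whereas any non-trivial $S^1$-action on a $c$-symplectically \emph{aspherical} manifold (and atoroidal implies aspherical, via the degree-one map $T^2\to S^2$) is fixed point free. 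That fixed-point-freeness statement is a non-elementary theorem about aspherical symplectic manifolds, not something recoverable from the torus-sweeping computation; without it (or a substitute) your argument does not close.
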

\begin{proof}
Let $(M,\omega)$ be symplectically atoroidal. It follows immediately from the definitions that the \emph{flux homomorphism} $F_\omega :\pi_1(\operatorname{Symp}_0(M,\omega))\to H^1(M;\RR)$ of \cite{Calabi,Banyaga} (see also \cite{Borat}) is zero. Then by \cite[Lemma 3.12]{LuptonOpreaCsymp} and the remarks preceding, any symplectic $S^1$-action on $(M,\omega)$ is $c$-Hamiltonian. By \cite[Corollary 3.7]{LuptonOpreaCsymp}, any $c$-Hamiltonian $S^1$-action on $(M,\omega)$ has a fixed point. However, by \cite[Theorem 4.16]{LuptonOpreaCsymp}, any non-trivial $S^1$-action on a $c$-symplectically aspherical manifold is fixed point free. Combining these results gives the Proposition.
\end{proof}

Thus our results tend to support the idea, explored in \cite{Grant}, that symmetries of manifolds are responsible for lowering their topological complexity. Note however that D.\ Cohen and L.\ Vandembroucq have recently shown \cite{CohenVan} that the Klein bottle $K$ has $\TC(K)=2 \dim(K)$, despite admitting an effective $S^1$-action.

The proof of Theorem \ref{ThmAtor} employs the notion of \emph{$\TC$-weight} of cohomology classes. This analogue of category weight was introduced and studied by M.\ Farber and the first author in \cite{FarberGrantSymm} and \cite{FarberGrantWeights}, in order to improve on the zero-divisors cup-length lower bound for topological complexity. A class $u\in H^*(X\times X)$ has $\wgt(u)\ge 1 $ if and only if it is a zero-divisor, and has $\wgt(u)\ge 2$ if and only if it is in the kernel of the homomorphism induced by the \emph{fibrewise join} $p_2: P_2 X\to X\times X$ of the free path fibration with itself. We show that for a closed atoroidal form $\omega\in \Omega^2(M)$ the associated zero-divisor $[\bar\omega] = 1\times [\omega] - [\omega]\times 1 \in H^2(M\times M;\RR)$ has $\wgt([\bar\omega])=2$, by analyzing the Mayer--Vietoris sequence associated to the fibrewise join $P_2M$.

Our methods use de Rham theory on infinite-dimensional manifolds of smooth paths and loops, for which we adopt the formalism of Kriegl and Michor \cite{KrieglMichor}. Special care needs to be taken in passing between de Rham and singular cohomology, because the space $P_2M$ is not a manifold.

The paper is organized as follows. In Section \ref{weight}, we recall the definition of the  $\TC$-weight of cohomology classes and its most important properties before relating it to fiberwise joins and their Mayer-Vietoris sequence. Section \ref{MVmanifold} looks at this Mayer-Vietoris sequence from a different angle in the case of manifolds, where we derive an explicit description of the connecting homomorphism in terms of differential forms. This description is applied to $c$-symplectic manifolds in Section \ref{symp} and yields a proof of Theorem \ref{ThmAtor}. We apply this theorem in Section \ref{appl} to derive several new computations of the topological complexity of $c$-symplectic manifolds.

\section{The $\TC$-weight of a cohomology class and fiberwise joins}
\label{weight}

Throughout this section we fix a topological space $X$. We let $PX=C^0([0,1],X)$ and $LX = C^0(S^1,X)$ denote the continuous path and loop spaces, respectively. For a local coefficient system $A$ on $X \times X$, we will call a cohomology class $u \in H^*(X \times X;A)$ a \emph{zero-divisor} if
$$u \in \ker \left[\Delta^*:H^*(X\times X;A) \to H^*(X;\Delta^*A) \right] , $$
where $\Delta:X \to X \times X$ denotes the diagonal map. 

In the following, we will briefly recall the notion of $\TC$-weight from \cite{FarberGrantSymm} and \cite{FarberGrantWeights}. It is an analogue of the strict category weight defined by Y. Rudyak in \cite{RudyakWeight}.

\begin{definition}
Let $A$ be a local coefficient system on $X \times X$ and $k \in \NN_0$. The \emph{$\TC$-weight} of a class $u \in H^*(X \times X;A)$, denoted $\wgt(u)$, is defined to be the maximal $k$ such that $f^*u=0 \in H^*(Y;f^*A)$ for every continuous map $f: Y \to X \times X$ for which there exist an open cover $U_1\cup \dots\cup U_k=Y$ and continuous maps $f_i:U_i \to PX$ with $\pi \circ f_i = f|_{U_i}$ for $i \in \{1,2,\dots,k\}$.
\end{definition}

The following properties of $\TC$-weight, proven by M. Farber and the first author in \cite{FarberGrantSymm}, illustrate its importance for obtaining estimates from below of topological complexity.

\begin{theorem}
\label{ThmTCwgt}
Let $A$ and $B$ be local coefficient systems on $X \times X$, and let $u\in H^*(X \times X;A)$ and $v\in H^*(X\times X;B)$ be cohomology classes.
\begin{enumerate}
\item
For the cup product $u\cup v\in H^*(X\times X;A\otimes B)$ we have
\begin{equation*}
\wgt(u \cup v) \geq \wgt(u)+\wgt(v) .
\end{equation*}
\item If $u$ is nonzero and $\wgt(u) = k$, then $\TC(X) \geq k$.
    \end{enumerate}
\end{theorem}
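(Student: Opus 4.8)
The plan is to prove the two statements separately: part (2) is a direct unwinding of the definitions, while part (1) is the substantive claim, proved exactly as one proves subadditivity of strict category weight (cf.\ \cite{RudyakWeight}).

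For part (2), I would simply test the identity map. If $\TC(X)=m$, choose an open cover $U_0\cup U_1\cup\dots\cup U_m=X\times X$ with continuous sections $s_j\colon U_j\to PX$ of $\pi$. Reindexing, this cover has $m+1$ members, each equipped with a lift of $\pi$, so the definition of $\TC$-weight applied to $f=\id_{X\times X}$ forces any class of $\TC$-weight $\geq m+1$ to satisfy $\id^*u=u=0$. Hence $\wgt(u)\leq\TC(X)$ for every nonzero $u$; in particular $u\neq 0$ and $\wgt(u)=k$ imply $\TC(X)\geq k$.

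For part (1), set $k=\wgt(u)$ and $l=\wgt(v)$, and let $f\colon Y\to X\times X$ be any continuous map for which there exist an open cover $Y=W_1\cup\dots\cup W_{k+l}$ and lifts $f_i\colon W_i\to PX$ with $\pi\circ f_i=f|_{W_i}$; the goal is to show $f^*(u\cup v)=0$. Put $Y_1=W_1\cup\dots\cup W_k$ and $Y_2=W_{k+1}\cup\dots\cup W_{k+l}$, which are open with $Y_1\cup Y_2=Y$. Restricting the $f_i$ exhibits a $\pi$-lifting cover of $f|_{Y_1}$ of cardinality $k$ and of $f|_{Y_2}$ of cardinality $l$, so $(f|_{Y_1})^*u=0$ and $(f|_{Y_2})^*v=0$ by definition of the respective weights. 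From the long exact sequences of the pairs $(Y,Y_1)$ and $(Y,Y_2)$ one then obtains relative classes $\tilde u\in H^*(Y,Y_1;f^*A)$ and $\tilde v\in H^*(Y,Y_2;f^*B)$ restricting to $f^*u$ and $f^*v$. Since $\{Y_1,Y_2\}$ is an open, hence excisive, couple, the relative cup product is defined (also with local coefficients) and gives
\[
\tilde u\cup\tilde v\in H^*(Y,Y_1\cup Y_2;f^*(A\otimes B))=H^*(Y,Y;f^*(A\otimes B))=0,
\]
while by the compatibility of the relative and absolute cup products $\tilde u\cup\tilde v$ restricts to $f^*u\cup f^*v=f^*(u\cup v)$. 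Therefore $f^*(u\cup v)=0$, and as $f$ was arbitrary, $\wgt(u\cup v)\geq k+l$.

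I do not expect a genuine obstacle, since the proof of (1) is the verbatim analogue of the classical argument for strict category weight. The only points I would check carefully are: that the relative cup product $H^*(Y,Y_1;\,\cdot\,)\otimes H^*(Y,Y_2;\,\cdot\,)\to H^*(Y,Y_1\cup Y_2;\,\cdot\,)$ exists, is natural, and is compatible with the absolute cup product in the presence of local coefficient systems (standard for open $Y_1,Y_2$, e.g.\ via barycentric subdivision); and the degenerate cases $k=0$ or $l=0$, in which one of the subspaces is empty, the corresponding relative cohomology group collapses to the absolute one, and the displayed argument still goes through unchanged.
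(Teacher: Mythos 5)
Your proof is correct. The paper does not prove this theorem itself but cites Farber--Grant \cite{FarberGrantSymm}, and your argument (testing the identity map for part (2), and for part (1) lifting $f^*u$ and $f^*v$ to relative classes over the two sub-unions and using that the relative cup product of the excisive couple $\{Y_1,Y_2\}$ lands in $H^*(Y,Y)=0$) is precisely the standard proof given there, including the correct handling of the degenerate cases $k=0$ or $l=0$.
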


A class in $H^*(X\times X;A)$ has positive $\TC$-weight if and only if it is a zero-divisor, see \cite[p. 3361]{FarberGrantWeights}. In the remainder of this section, we want to determine a criterion to decide if a given cohomology class has $\TC$-weight at least $2$. A key role is played by the \emph{fiberwise join} of the fibration $\pi: PX \to X \times X$ with itself, which we will denote by $p_2:P_2X \to X \times X$, see e.g. \cite[Section 2]{FGKV} for a detailed construction. More precisely, a result of Schwarz from \cite{SchwarzGenus} applied to the fibration $\pi$ shows the following:
\begin{prop}
\label{wgt2}
Let $A$ be a local coefficient system on $X \times X$, and let $u \in H^*(X \times X;A)$. If
$$u \in \ker \left[p_2^*: H^*(X \times X;A) \to H^*(P_2X;p_2^*A) \right]  , $$
then $\wgt(u) \geq 2$.
\end{prop}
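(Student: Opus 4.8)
The plan is to deduce this from A.~Schwarz's theory of the genus of a fibration \cite{SchwarzGenus}, applied to the free path fibration $\pi\colon PX\to X\times X$, by exhibiting the fiberwise join $p_2\colon P_2X\to X\times X$ as the universal object that detects coverings of size two by partial lifts. Recall the explicit model: over a point $(x_0,x_1)\in X\times X$ the fiber of $\pi$ is the space $\Omega_{x_0,x_1}X$ of paths from $x_0$ to $x_1$, and the fiber of $p_2$ is the join $\Omega_{x_0,x_1}X \ast \Omega_{x_0,x_1}X$, whose points are formal convex combinations $(1-t)\gamma_1\oplus t\gamma_2$ with $t\in[0,1]$ and $\gamma_1,\gamma_2$ paths sharing the endpoints $x_0,x_1$, subject to the usual join identifications (a summand with zero coefficient is discarded).

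Unwinding the definition of $\TC$-weight, to prove $\wgt(u)\ge 2$ I must show that $f^*u=0\in H^*(Y;f^*A)$ for every continuous $f\colon Y\to X\times X$ admitting an open cover $Y=U_1\cup U_2$ together with maps $f_i\colon U_i\to PX$ satisfying $\pi\circ f_i=f|_{U_i}$ for $i=1,2$. The key step is to produce from this data a single global lift $\tilde f\colon Y\to P_2X$ with $p_2\circ\tilde f=f$. Choosing a partition of unity $\{\phi_1,\phi_2\}$ subordinate to $\{U_1,U_2\}$ (so $\phi_1+\phi_2=1$ and $\phi_i$ vanishes outside $U_i$), one sets
$$ \tilde f(y) = \phi_1(y)\,f_1(y)\oplus\phi_2(y)\,f_2(y). $$
This is well defined on all of $Y$: wherever $\phi_i(y)=0$ the (possibly undefined) term $f_i(y)$ is discarded, and then $\phi_{3-i}(y)=1$ forces $y\in U_{3-i}$, so the surviving term makes sense; it is continuous by the definition of the join topology; and it lands in the fiber of $p_2$ over $f(y)$ since $f_1(y)$ and $f_2(y)$, when defined, are paths between the two coordinates of $f(y)$. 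Given this lift, $f^*u=\tilde f^*(p_2^*u)=\tilde f^*(0)=0$, which is exactly what the definition of $\TC$-weight requires, so $\wgt(u)\ge 2$. This is the content of \cite[Theorem~4]{SchwarzGenus} specialized to $\pi$ and to coverings of size two; nothing specific to path fibrations enters, so the same argument gives $\wgt(u)\ge n$ whenever $u$ lies in the kernel of the $n$-fold fiberwise join.

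The main obstacle is not conceptual but point-set theoretic: the construction of $\tilde f$ rests on having a partition of unity subordinate to $\{U_1,U_2\}$, which requires $Y$ to satisfy suitable separation hypotheses (normality suffices for a two-element cover, paracompactness in general). This is precisely the regularity assumption built into Schwarz's framework, and in all cases of interest to us---where $Y$ is a manifold, a CW complex, or a subspace of $X\times X$---it holds automatically; so I would simply note this and invoke \cite{SchwarzGenus} for the general statement. Everything else is a formality once the explicit fiberwise-join model is in hand.
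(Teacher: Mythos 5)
Your proposal is correct and follows essentially the same route as the paper, which simply cites Schwarz's theorem \cite[Theorem 4]{SchwarzGenus} applied to $\pi$ without giving details: you have unpacked exactly the standard argument behind that citation, namely using a partition of unity subordinate to $\{U_1,U_2\}$ to assemble the two partial lifts into a global lift $\tilde f\colon Y\to P_2X$ through the fiberwise join, whence $f^*u=\tilde f^*p_2^*u=0$. Your explicit flagging of the normality/paracompactness hypothesis needed for the partition of unity is the right caveat and matches the regularity assumptions implicit in Schwarz's framework and in the definition of $\TC$-weight as used in \cite{FarberGrantSymm,FarberGrantWeights}.
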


We want to take a closer look at the cohomology of $P_2X$. We define continuous maps $r_1,r_2: LX \to PX$ by
\begin{equation}
\label{r1r2}
(r_1(\gamma))(t)= \gamma\left(\tfrac{t}2\right) , \quad (r_2(\gamma))(t)= \gamma\left(1-\tfrac{t}2 \right) \quad \forall \gamma \in LX, \ \ t \in [0,1] .
\end{equation}
One checks without difficulties that $(r_1,r_2): LX \to PX \times PX$ maps $LX$ homeomorphically onto the pullback of $PX \stackrel{\pi}{\rightarrow} X \times X \stackrel{\pi}{\leftarrow} PX$, so that the following is a pullback diagram:
\begin{equation}
\label{pullback}
\xymatrix{
LX \ar[r]^-{r_1} \ar[d]_{r_2} &  PX \ar[d]^\pi\\
PX \ar[r]^-\pi &  X \times X .
}
\end{equation}
The total space of the fibration $P_2X$ is a homotopy pushout of the diagram $PX \stackrel{r_1}\leftarrow LX \stackrel{r_2}\rightarrow PX$, and we let $i_1,i_2: PX \to P_2X$ and $p_2: P_2X \to X \times X$ be the induced maps making the following diagram commutative up to homotopy:
\begin{equation}
\label{pushout}
 \xymatrix{
LX \ar[r]^{r_1} \ar[d]_{r_2}& PX \ar[d]^{i_2} \ar@/^/[ddr]^{\pi} & \\
PX \ar[r]^{i_1} \ar@/_/[rrd]_{\pi} & P_2X \ar[dr]^{p_2} & \\
& & X \times X.
}
\end{equation}
In fact, by taking $P_2 X$ to be the double mapping cylinder of the maps $r_1$ and $r_2$, and taking $p_2$ to be the whisker map induced by the constant homotopy across the diagram (\ref{pullback}), we can arrange that the triangles in the above diagram strictly commute.

We want to consider the Mayer-Vietoris cohomology sequence associated with this homotopy pushout, see \cite[Chapter 21]{StromClassical}, which is of the form
$$\dots\to H^{k}(P_2X;A) \stackrel{i_1^*\oplus i_2^*}\to H^{k}(PX;A) \oplus H^{k}(PX;A) \stackrel{r_1^*-r_2^*}\to H^{k}(LX;A) \stackrel{\delta}\to H^{k+1}(P_2X;A) \to \dots   $$
for any abelian group $A$.

\begin{prop}
\label{PropMVimage}
Let $A$ be an abelian group and $k \in \NN$. A class $u \in H^k(X \times X;A)$ is a zero-divisor if and only if
$$p_2^*u \in \im \left[\delta:H^{k-1}(LX;A) \to H^k(P_2X;A) \right],$$
where $\delta$ denotes the connecting homomorphism of the above Mayer-Vietoris sequence.
\end{prop}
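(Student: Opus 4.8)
The plan is to read the equivalence straight off the exactness of the displayed Mayer--Vietoris sequence, combined with the commutativity of diagram (\ref{pushout}). Concretely, by exactness at the term $H^k(P_2X;A)$ we have
$\im\bigl[\delta\colon H^{k-1}(LX;A)\to H^k(P_2X;A)\bigr] = \ker\bigl[i_1^*\oplus i_2^*\colon H^k(P_2X;A)\to H^k(PX;A)\oplus H^k(PX;A)\bigr]$,
so the condition $p_2^*u\in\im\delta$ is equivalent to $i_1^*p_2^*u=0$ and $i_2^*p_2^*u=0$ in $H^k(PX;A)$.

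Next I would use the triangles in (\ref{pushout}), which commute strictly in the double mapping cylinder model (and in any case up to homotopy, which already suffices on cohomology): $p_2\circ i_1=\pi=p_2\circ i_2$. Hence $i_1^*p_2^*u=\pi^*u=i_2^*p_2^*u$, and the pair of conditions from the previous paragraph collapses to the single condition $\pi^*u=0$ in $H^k(PX;A)$.

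Finally I would identify $\ker\bigl[\pi^*\colon H^k(X\times X;A)\to H^k(PX;A)\bigr]$ with the set of zero-divisors. The inclusion $c\colon X\to PX$ of constant paths is a homotopy equivalence (a homotopy inverse being evaluation at $0$, with the standard contracting homotopy $\gamma\mapsto\gamma(s\,\cdot\,)$), and $\pi\circ c=\Delta$. Thus $c^*$ is an isomorphism sending $\pi^*u$ to $\Delta^*u$, so $\pi^*u=0$ if and only if $\Delta^*u=0$, i.e.\ if and only if $u$ is a zero-divisor. Chaining the three equivalences proves the Proposition.

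There is no serious obstacle here: once the Mayer--Vietoris sequence of the homotopy pushout (\ref{pushout}) is available (\cite[Chapter 21]{StromClassical}), the argument is purely formal. The only points deserving a word of justification are the compatibility $p_2\circ i_j=\pi$ — supplied by the explicit double mapping cylinder model described after (\ref{pushout}) — and the fact that $c$ is a genuine homotopy equivalence; both are routine, so the main work of the paper lies not here but in later sections, where $\delta$ must be computed in terms of differential forms.
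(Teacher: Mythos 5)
Your proposal is correct and follows essentially the same route as the paper: exactness of the Mayer--Vietoris sequence reduces the condition to $i_\nu^*p_2^*u=0$, and the identification of $\ker[\pi^*]$ with the zero-divisors is made via a homotopy equivalence between $PX$ and $X$ compatible with $\Delta$ (the paper uses the evaluation $e\colon PX\to X$ where you use its homotopy inverse, the constant-path inclusion --- a purely cosmetic difference).
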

\begin{proof}
We consider the map $e:PX \to X$, $\gamma \mapsto \gamma(0)$, given by evaluating a path at its initial point. For each $\nu\in\{1,2\}$, the diagram
\begin{equation}
\label{ev}
\xymatrix{
PX \ar[r]^{i_\nu} \ar[d]_e & P_2 X \ar[d]^{p_2} \\
X \ar[r]^-\Delta &  X \times X .
}
\end{equation}
 commutes up to homotopy. Since the vertical map $e$ is well-known to be a homotopy equivalence, it follows that
$$\ker \left[\Delta^*:H^k(X \times X;A) \to H^k(X;A) \right]= \ker \left[i_\nu^*\circ p_2^*:H^k(X \times X;A) \to H^k(PX;A) \right].$$
Thus, $u \in H^k(X\times X;A)$ is a zero-divisor if and only if $$p_2^*u \in \ker \left[i_\nu^*: H^k(P_2X;A)\to H^k(PX;A) \right]$$
for $\nu \in \{1,2\}$ and the exactness of the Mayer-Vietoris sequence shows the claim.
\end{proof}

We next combine the previous propositions to obtain a criterion for a cohomology class to have $\TC$-weight two.

\begin{cor}
Let $A$ be an abelian group and $k \in \NN$ with $k \geq 2$. If the connecting homomorphism from the above Mayer-Vietoris sequence $\delta:H^{k-1}(LX;A) \to H^k(P_2X;A)$  vanishes, then every zero-divisor $u \in H^k(X \times X;A)$ satisfies $\wgt(u)\geq 2$.
\end{cor}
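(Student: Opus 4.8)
The plan is simply to chain together the two preceding propositions, so the argument is short. I would begin with an arbitrary zero-divisor $u \in H^k(X \times X;A)$, viewing the abelian group $A$ as a constant local coefficient system on $X \times X$; then $p_2^*A$ is the corresponding constant system on $P_2X$, and the Mayer--Vietoris sequence under consideration is the one for this constant system. Since $k \geq 2$ we have $k-1 \geq 1$, so the term $H^{k-1}(LX;A)$ together with the connecting map $\delta\colon H^{k-1}(LX;A) \to H^k(P_2X;A)$ genuinely appears in the sequence, and Proposition \ref{PropMVimage} applies.

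By Proposition \ref{PropMVimage}, the class $p_2^*u \in H^k(P_2X;A)$ lies in $\im\big[\delta\colon H^{k-1}(LX;A) \to H^k(P_2X;A)\big]$. The hypothesis is precisely that this connecting homomorphism vanishes, so $\im\delta = 0$ and therefore $p_2^*u = 0$. In other words, $u \in \ker\big[p_2^*\colon H^*(X \times X;A) \to H^*(P_2X;p_2^*A)\big]$.

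Finally I would feed this into Proposition \ref{wgt2}, which is stated for an arbitrary local system and in particular applies to our constant system $A$: it gives $\wgt(u) \geq 2$, which is exactly the assertion of the corollary.

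There is no substantive obstacle here; the corollary is a formal consequence of Propositions \ref{wgt2} and \ref{PropMVimage}. The only point that needs a moment's attention is the reconciliation of coefficients: Proposition \ref{wgt2} is phrased for an arbitrary local coefficient system on $X \times X$, whereas Proposition \ref{PropMVimage} and the Mayer--Vietoris sequence are phrased for an abelian group. Since an abelian group is the same thing as a constant local system, and the pullback of a constant system along any map is again constant, the two statements are compatible and may be composed as above.
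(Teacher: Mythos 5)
Your argument is correct and is precisely the paper's proof, which simply cites Propositions \ref{wgt2} and \ref{PropMVimage} as an immediate consequence; you have only spelled out the chain of implications and the (harmless) reconciliation of constant coefficients with local systems. Nothing further is needed.
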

\begin{proof}
This is an immediate consequence of Propositions \ref{wgt2} and \ref{PropMVimage}.
\end{proof}

\section{The connecting homomorphism for manifolds}
\label{MVmanifold}

We want to make use of Proposition \ref{PropMVimage} to investigate zero-divisors that are induced by degree-two cohomology classes of a manifold. More precisely, given a manifold $M$ and a closed $2$-form $\omega \in \Omega^2(M)$ we want to determine a $1$-cocycle on $LM$ whose cohomology class maps to the class of $p_2^*[1\times \omega - \omega \times 1]$ under the connecting homomorphism
$$\delta: H^1(LM;\RR) \to H^2(P_2M;\RR) . $$
To work in a de Rham-theoretic setting, we briefly discuss differential forms and de Rham cohomology on smooth path and loop spaces of manifolds.

Let $M$ be a finite-dimensional smooth manifold. We consider the spaces
$$\Lambda M=C^\infty(S^1,M)\quad \text{and} \quad \PM=C^\infty([0,1],M), $$
both equipped with the respective Whitney topology. As a consequence of the results of \cite{KrieglMichor}, see also \cite{Stacey}, both $\Lambda M$ and $\PM$ possess the structure of infinite-dimensional Fr\'{e}chet manifolds, locally modelled on the Fr\'{e}chet spaces $C^\infty(S^1,\RR^{\dim M})$ and $C^\infty([0,1],\RR^{\dim M})$, resp. There are homotopy equivalences $\Lambda M\simeq LM$ and $\PM \simeq PM$, see \cite[Section 4.2]{Stacey} for details. We may thus consider $\Lambda M$ and $\PM$ as ``differentiable replacements" of spaces of continuous paths and loops. Moreover, it follows from the results of \cite[Section 42]{KrieglMichor} that with respect to this manifold structure, the evaluation maps $e_t: \Lambda M \to M$ and $e_t: \PM \to M$, both given by $x \mapsto x(t)$, are smooth for every $t \in [0,1]$. This particularly implies the smoothness of the endpoint evaluation map $\pi: \PM \to M \times M$, $\pi(\gamma)=(\gamma(0),\gamma(1))$.

Recall that $\TC(M)=\secat(\pi:PM\to M\times M)$, the sectional category of the free path fibration. The following lemma allows us to work with smooth paths throughout.

\begin{lemma}\label{lem:smoothTC}
If $M$ is a smooth manifold, then $\TC(M)=\secat(\pi:\PM\to M\times M)$.
\end{lemma}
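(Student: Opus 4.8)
The plan is to show that the two sectional categories agree by producing, from any open cover of $M \times M$ with continuous partial sections of $\pi: PM \to M \times M$, an open cover (possibly refined) admitting continuous partial sections of the smooth path fibration $\pi: \PM \to M \times M$, and conversely. One inequality is essentially free: since there is a continuous inclusion $\PM \hookrightarrow PM$ commuting with the endpoint-evaluation maps, any partial section of $\pi: \PM \to M \times M$ yields a partial section of $\pi: PM \to M \times M$ over the same open set, so $\TC(M) = \secat(PM \to M\times M) \le \secat(\PM \to M \times M)$. The content is the reverse inequality.

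For the reverse inequality, the first step is to recall that since $M$ is a smooth (in particular, ANR/CW-type) manifold, $M \times M$ is a paracompact ANR and the spaces in question are well-behaved, so both sectional categories may be computed using covers by open sets over which a section exists up to fiberwise homotopy; equivalently, one may invoke the standard fact that $\secat$ of a fibration over a normal base depends only on the fiberwise homotopy type of the fibration (see Schwarz~\cite{SchwarzGenus}). Thus it suffices to exhibit a fiberwise homotopy equivalence between $\pi:\PM \to M \times M$ and $\pi: PM \to M \times M$, or at least a fiberwise map $PM \to \PM$ over $M \times M$. The second step is to construct such a map by smoothing: using a Riemannian metric on $M$ and a tubular-neighbourhood/partition-of-unity argument, one can define a continuous map $PM \to \PM$ sending a continuous path $\gamma$ to a smooth path with the same endpoints, depending continuously on $\gamma$ — for instance by first reparametrising $\gamma$ to be constant near $0$ and $1$, then convolving in a local chart or applying geodesic smoothing, being careful that the modification fixes $\gamma(0)$ and $\gamma(1)$. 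Composed with the homotopy equivalence $\PM \simeq PM$ (which by \cite[Section 4.2]{Stacey} is itself compatible with endpoint evaluation), this shows the two fibrations have the same fiberwise homotopy type over $M \times M$, hence the same sectional category.

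Alternatively, and perhaps more cleanly, one can argue directly at the level of covers: given an open cover $\{U_0,\dots,U_k\}$ of $M \times M$ with continuous sections $s_j : U_j \to PM$, post-compose each $s_j$ with the smoothing map $PM \to \PM$ constructed above to obtain continuous maps $U_j \to \PM$; since smoothing preserves endpoints, these are genuine partial sections of $\pi:\PM\to M\times M$, giving $\secat(\PM \to M\times M) \le k = \TC(M)$.

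The main obstacle is the construction of the continuous smoothing map $PM \to \PM$ that rigidly fixes endpoints and lands in the Whitney (Fréchet) topology on $\PM$; one must check that the smoothing operation — reparametrisation to kill derivatives at the ends followed by local convolution or geodesic averaging — is continuous from the compact-open topology on $PM$ to the finer $C^\infty$-Whitney topology on $\PM$, which requires controlling all derivatives uniformly and is where the formalism of Kriegl--Michor \cite{KrieglMichor} is genuinely used. Everything else (the trivial inequality, the reduction to fiberwise homotopy type, the behaviour of the homotopy equivalence $\PM \simeq PM$) is standard.
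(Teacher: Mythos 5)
Your easy inequality is exactly the paper's, but the reverse inequality as you have set it up rests entirely on a construction you do not carry out: a continuous, endpoint-preserving smoothing operator $PM\to\PM$ over $M\times M$, continuous from the compact--open topology into the Fr\'echet ($C^\infty$) topology on $\PM$. You correctly flag this as the main obstacle, but the sketch offered does not close it. Concretely: a path need not lie in a single chart, so ``convolving in a local chart'' has to be replaced by embedding $M$ in some $\RR^N$, convolving there, and retracting back through a tubular neighbourhood; the mollifier width cannot be fixed uniformly over $PM$, since how small it must be for the convolved path to stay inside the tubular neighbourhood depends on the modulus of continuity of the individual path, so one needs a width varying continuously with $\gamma$ (say via a partition of unity on the metrizable space $PM$); and one must then verify continuity into all $C^k$-norms simultaneously. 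None of this is impossible, but none of it is available in quotable form in \cite{KrieglMichor} or \cite{Stacey}, so as written the argument has a hole at precisely its load-bearing step. (Your first variant, which invokes invariance of $\secat$ under fibrewise homotopy equivalence, has the further unaddressed issue that $\pi:\PM\to M\times M$ would need to be shown to be a fibration for Dold-type arguments to apply; your ``cleaner'' second variant rightly avoids this, since a fibrewise map already suffices to push sections forward.)

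The paper's proof avoids any global smoothing operator by working one open set at a time and smoothing a map out of a \emph{finite-dimensional} manifold instead of a path functorially in the path. Given a continuous local section $s:U\to PM$, its adjoint is a homotopy $H:U\times I\to M$ from ${\rm pr}_1|_U$ to ${\rm pr}_2|_U$; by the relative Whitney approximation theorem (\cite[Theorem 6.29]{Lee}) this is homotopic rel $U\times\partial I$ to a smooth homotopy $\tilde H$, and the adjoint of $\tilde H$ is a map $U\to\PM$ (smooth, hence continuous, by the exponential law of \cite{KrieglMichor}) which is again a section of $\pi$ because the endpoints were held fixed throughout. If you want a complete proof, either switch to this local, adjoint-based route or actually construct the smoothing operator with the estimates indicated above.
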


\begin{proof}
The inclusion of smooth paths into continuous paths gives a commuting diagram
\[
\xymatrix{
\PM \ar@{^{(}->}[rr] \ar[dr]_{\pi} & & PM \ar[dl]^{\pi} \\
   & M\times M &
   }
   \]
   from which it is clear that $\TC(M)\leq \secat(\pi:\PM\to M\times M)$.

   So suppose $U\subseteq M\times M$ is an open set with a local section $s:U\to PM$ of the (continuous) path fibration. The adjoint of $s$ is a homotopy $H:U\times I\to M$ between the restricted projections ${\rm pr}_1|_{U}$ and ${\rm pr}_2|_{U}$. This is homotopic rel $U\times \partial I$ to a smooth homotopy $\tilde{H}:U\times I\to M$, see \cite[Theorem 6.29]{Lee} for example. The adjoint of $\tilde{H}$ gives a local section $\tilde{s}:U\to \PM$ of the smooth path fibration. Hence $\secat(\pi:\PM\to M\times M)\leq \TC(M)$, and we are done.
    \end{proof}

With $\Gamma$ denoting the space of smooth sections of a vector bundle, the tangent spaces of elements of $\Lambda M$ and $\PM$ are given by
 $$T_x\Lambda M = \Gamma(x^*TM)\quad \forall x  \in \Lambda M, \quad T_y\PM = \Gamma(y^*TM) \quad \forall y \in \PM  . $$
Tangent vectors at a loop or path are vector fields along that loop or path. The derivative $De_t:T_x\Lambda M\to T_{x(t)} M$ is given by evaluating the vector field at $x(t)$, and similarly for $De_t:T_y\PM\to T_{y(t)}M$. In particular, the map $\pi:\PM\to M\times M$ is a submersion, and the diagram (\ref{pullback}) remains a pullback if we replace $PM$ and $LM$ by $\PM$ and $\Lambda M$.
 	
For $k \in \NN$ we let $\Omega^k(M)$ denote the space of smooth real-valued $k$-forms on $M$. For each $\omega \in \Omega^k(M)$ we let $\bar\omega \in \Omega^k(M \times M)$ denote the form given by
$$\bar\omega = 1 \times \omega - \omega \times 1 :={\rm pr}_2^*\omega - {\rm pr}_1^*\omega  .$$
It is easy to see that if $\omega$ is closed, then $\bar\omega$ will be closed as well. Moreover, since ${\rm pr}_1\circ\Delta= {\rm pr}_2\circ\Delta$, the class $[\bar\omega] \in H^k(M \times M;\RR)$ will be a zero-divisor for every closed $\omega \in \Omega^k(M)$.

In analogy with the finite-dimensional case, we define the space of smooth $k$-forms on $\PM$ for each $k \in \NN$ by $\Omega^k(\PM) = \Gamma(L^k_a(T\PM))$, where $L^k_a(T\PM)$ denotes the bundle of alternating $k$-linear forms on $T\PM$. We define $\Omega^k(\Lambda M)$ similarly.

\begin{definition}\label{aomega}
Given $\omega \in \Omega^2(M)$ we let $\beta_\omega \in \Omega^1(\PM)$ be defined by
$$(\beta_\omega)_x[\xi] = \int_0^1 \omega_{x(t)}\big(\dot{x}(t),\xi(t)\big) \ dt \quad \forall x \in \PM, \ \ \xi \in T_x \PM  .$$
\end{definition}

\begin{prop}
\label{PropPathform}
If $\omega \in \Omega^2(M)$ is closed, then
$$\pi^*\bar\omega = d\beta_\omega \in \Omega^2(\PM)  , $$
where $\pi: \PM \to M \times M$ again denotes the endpoint evaluation map.
\end{prop}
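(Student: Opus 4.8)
The plan is to recognize $\beta_\omega$ as a fibrewise integral over the interval and then to differentiate under the integral sign via a Stokes-type formula; this is an infinite-dimensional version of the classical transgression computation. Let $\mathrm{ev}\colon\PM\times[0,1]\to M$, $\mathrm{ev}(x,t)=x(t)$, denote the joint evaluation map, which is smooth by the results of \cite[Section 42]{KrieglMichor}; hence $\mathrm{ev}^*\omega$ is a smooth $2$-form on the Fr\'{e}chet manifold-with-boundary $\PM\times[0,1]$. Since $D\mathrm{ev}_{(x,t)}(\xi,a\,\partial_t)=\xi(t)+a\,\dot x(t)$, the component of $\mathrm{ev}^*\omega$ containing $dt$ has coefficient $(x,t)\mapsto\omega_{x(t)}\big(\dot x(t),\,\cdot\,\big)$, so integrating $\mathrm{ev}^*\omega$ over the $[0,1]$-fibres returns precisely the form of Definition \ref{aomega}:
\[
\beta_\omega=\int_{[0,1]}\mathrm{ev}^*\omega .
\]

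I would then apply the fibrewise Stokes formula for the trivial bundle $\PM\times[0,1]\to\PM$, which for a $2$-form $\alpha$ reads $d\int_{[0,1]}\alpha=-\int_{[0,1]}d\alpha+\iota_1^*\alpha-\iota_0^*\alpha$, with $\iota_s\colon\PM\to\PM\times[0,1]$, $x\mapsto(x,s)$. Taking $\alpha=\mathrm{ev}^*\omega$ and using $d\,\mathrm{ev}^*\omega=\mathrm{ev}^*d\omega=0$ (because $\omega$ is closed), the interior term drops out and only the boundary terms survive, giving $d\beta_\omega=\iota_1^*\mathrm{ev}^*\omega-\iota_0^*\mathrm{ev}^*\omega=e_1^*\omega-e_0^*\omega$, where $e_t\colon\PM\to M$ is evaluation at time $t$. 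Since $\pi=(e_0,e_1)$ and $\bar\omega={\rm pr}_2^*\omega-{\rm pr}_1^*\omega$, this right-hand side is exactly $\pi^*\bar\omega$, which is the claim. Note that the various sign conventions for fibre integration are immaterial here, as the only surviving contribution is the boundary term.

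The step that needs genuine care is the justification of the fibrewise Stokes formula over the infinite-dimensional base $\PM$. Over a finite-dimensional base it is standard, and one can reduce to that case: to prove an equality of smooth $2$-forms on $\PM$ it suffices to check it after pulling back along every smooth map $c\colon N\to\PM$ with $N$ a finite-dimensional manifold, since every tangent vector of $\PM$ is the velocity of a smooth curve and, in a chart, maps of the form $c(s)(t)=x(t)+\sum_i s_i\xi_i(t)$ are smooth by cartesian closedness of the Kriegl--Michor calculus. For such $c$, the adjoint $\hat c\colon N\times[0,1]\to M$, $\hat c(s,t)=c(s)(t)$, is an ordinary smooth map between finite-dimensional manifolds, one has $c^*\beta_\omega=\int_{[0,1]}\hat c^*\omega$ as an honest fibre integral, and the classical Stokes theorem yields $d(c^*\beta_\omega)=(e_1\circ c)^*\omega-(e_0\circ c)^*\omega=c^*\pi^*\bar\omega$. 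The real work is thus bookkeeping in the calculus of Fr\'{e}chet manifolds rather than anything geometric; alternatively one could compute $d\beta_\omega$ directly on $\PM$ via the invariant formula for the exterior derivative applied to vector fields built from variations of paths, but the reduction to finite dimensions seems cleanest, and I expect it to be the main (though not deep) obstacle.
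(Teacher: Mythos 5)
Your proposal is correct and is essentially the paper's argument: both realize $\beta_\omega$ as the fibre integral of $\mathrm{ev}^*\omega$ over $[0,1]$ and obtain $d\beta_\omega=e_1^*\omega-e_0^*\omega=\pi^*\bar\omega$ from the homotopy (fibrewise Stokes) formula, using closedness of $\omega$ to kill the interior term. The only difference is how the infinite-dimensional technicality is discharged: the paper invokes the explicit homotopy operator $K=I^1_0\circ\iota_T\circ e^*$ of \cite[Lemma 34.2]{KrieglMichor}, applied to the reparametrized homotopy $e(x,t)=x(\phi(t))$ defined on $\PM\times\RR$ (thereby avoiding a manifold with boundary), whereas you propose to verify the fibrewise Stokes identity by pulling back along smooth maps from finite-dimensional test manifolds --- both routes are legitimate.
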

\begin{proof}
We first note that
\[
\pi^*\bar\omega = \pi^*({\rm pr}_2^* - {\rm pr}_1^*)\omega = e_1^*\omega - e_0^*\omega \in \Omega^2(\PM).
\]
Given a smooth non-decreasing function $\phi:\RR\to[0,1]$ such that $\phi(t)=0$ for $t\leq 1/4$ and $\phi(t)=1$ for $t\geq 3/4$, the map
$$
e:\PM\times\RR\to M,\qquad e(x,t)= x(\phi(t))
$$
defines a smooth homotopy from $e_0$ to $e_1$. As in the proof of homotopy invariance of de Rham cohomology (given as \cite[Lemma 34.2]{KrieglMichor} in the infinite dimensional case) we can therefore define a homotopy operator $K:\Omega^*(M)\to\Omega^{*-1}(\PM)$ satisfying $e_1^*-e_0^*=dK + Kd$, as follows. For each $t\in \RR$, let ${\rm ins}_t:\PM\to \PM\times \RR$ be given by ${\rm ins}_t(x)=(x,t)$. Define an integral operator $I^1_0:\Omega^*(\PM\times\RR)\to \Omega^*(\PM)$ by
\[
I^1_0(\varphi) = \int^1_0 {\rm ins}_t^*\varphi \,dt.
\]
Let $T:=\left( 0,\partial/{\partial t}\right)$ be the unit vector field in the $\RR$ direction on $\PM\times \RR$, and denote by $\iota_T:\Omega^*(\PM\times\RR)\to \Omega^{*-1}(\PM\times \RR)$ the interior product with $T$. Finally, let $e^*:\Omega^*(M)\to \Omega^*(\PM\times \RR)$ be pullback along $e$.

Then the composition $K=I^1_0\circ \iota_T\circ e^*$ defines a homotopy operator, as shown in the proof of \cite[Lemma 34.2]{KrieglMichor}. If $\omega\in \Omega^2(M)$ is closed, it follows that
\[
 \pi^*\bar\omega = e_1^*\omega - e_0^*\omega = dK(\omega).
 \]

 It only remains to check that with the above definitions we have $K(\omega) = \beta_\omega\in \Omega^1(\PM)$. Given $x \in \PM$ and $\xi \in T_x \PM$, note that for every $t\in [0,1]$ we have $D({\rm ins}_t)_x(\xi) = (\xi,0)\in T_{(x,t)}(\PM\times\RR)\cong T_x\PM\times T_t\RR$, while $De_{(x,t)}(T)=\dot x(t)$ and $De_{(x,t)}(\xi,0) = \xi(t)$ (compare \cite[Corollary 42.18]{KrieglMichor}). It follows that
  \begin{align*}
 K(\omega)_x[\xi] & =\int^1_0  ({\rm ins}_t^* \iota_T e^*\omega)_x(\xi) \, dt \\
                  & = \int^1_0 (\iota_T e^*\omega)_{(x,t)}(\xi,0) \, dt \\
                  & = \int^1_0 \omega_{x(t)}\big(\dot x(t),\xi(t)\big)\, dt \\
                  & = (\beta_\omega)_x[\xi],
 \end{align*}
 as claimed.
\end{proof}

\begin{prop}
\label{PropLoopform}
Given a closed $2$-form $\omega \in \Omega^2(M)$ we let $\alpha_{\omega} \in \Omega^1(\Lambda M)$ be given by
$$\alpha_\omega = r_1^*\beta_\omega - r_2^*\beta_\omega, $$
where $\beta_\omega$ is given in Definition \ref{aomega} and $r_1,r_2: \Lambda M\to \PM$ are defined by equations (\ref{r1r2}). Then $d \alpha_\omega = 0$.
\end{prop}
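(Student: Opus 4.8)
The plan is to differentiate $\alpha_\omega$ directly and reduce everything to Proposition \ref{PropPathform} together with the commutativity of the pullback square (\ref{pullback}). Since exterior differentiation commutes with smooth pullback (this holds in the convenient calculus of \cite{KrieglMichor} exactly as in finite dimensions), we have
\[
d\alpha_\omega = d(r_1^*\beta_\omega - r_2^*\beta_\omega) = r_1^*(d\beta_\omega) - r_2^*(d\beta_\omega).
\]
By Proposition \ref{PropPathform}, $d\beta_\omega = \pi^*\bar\omega$, so
\[
d\alpha_\omega = r_1^*\pi^*\bar\omega - r_2^*\pi^*\bar\omega = (\pi\circ r_1)^*\bar\omega - (\pi\circ r_2)^*\bar\omega.
\]

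It therefore suffices to observe that $\pi\circ r_1 = \pi\circ r_2$ as maps $\Lambda M\to M\times M$. This is precisely the commutativity of the pullback diagram (\ref{pullback}) (which, as noted in the text, remains a pullback after replacing $PM$, $LM$ by the Fr\'echet manifolds $\PM$, $\Lambda M$). Concretely, for $\gamma\in\Lambda M$ one computes from (\ref{r1r2}) that
\[
\pi(r_1(\gamma)) = \big(r_1(\gamma)(0), r_1(\gamma)(1)\big) = \big(\gamma(0),\gamma(\tfrac12)\big),
\qquad
\pi(r_2(\gamma)) = \big(r_2(\gamma)(0), r_2(\gamma)(1)\big) = \big(\gamma(1),\gamma(\tfrac12)\big),
\]
and these agree because $\gamma(0)=\gamma(1)$ for a loop. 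Hence $(\pi\circ r_1)^*\bar\omega = (\pi\circ r_2)^*\bar\omega$ and $d\alpha_\omega = 0$.

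There is essentially no serious obstacle here; the only point requiring a word of care is that we are working with differential forms on the infinite-dimensional manifolds $\PM$ and $\Lambda M$, so one should be sure that $d$ commutes with the pullbacks along the smooth maps $r_1,r_2$ — but this is part of the standard de Rham formalism in \cite{KrieglMichor}, and the smoothness of $r_1$, $r_2$ follows from the smoothness of the evaluation maps already invoked in the text. One could alternatively avoid even mentioning Proposition \ref{PropPathform} and instead write $\alpha_\omega$ out explicitly as an integral and differentiate under the integral sign, but routing through $d\beta_\omega = \pi^*\bar\omega$ and the pullback square is cleaner and makes the geometric reason for closedness transparent.
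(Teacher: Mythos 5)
Your proof is correct and follows essentially the same route as the paper: both compute $d\alpha_\omega = r_1^*d\beta_\omega - r_2^*d\beta_\omega$, invoke Proposition \ref{PropPathform}, and conclude from the identity $\pi\circ r_1 = \pi\circ r_2$ (which the paper phrases componentwise as $e_0\circ r_1 = e_0\circ r_2$ and $e_1\circ r_1 = e_1\circ r_2$, the former using that loops are closed). No gaps.
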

\begin{proof}
Using Proposition \ref{PropPathform}, we have
\begin{align*}
d \alpha_\omega & = d(r_1^*\beta_\omega - r_2^*\beta_\omega) \\
                & = r_1^*d\beta_\omega - r_2^*d\beta_\omega \\
                & = r_1^*(e_1^*\omega - e_0^*\omega) - r_2^*(e_1^*\omega - e_0^*\omega) \\
                & = (r_2^*e_0^*\omega - r_1^*e_0^*\omega) + (r_1^*e_1^*\omega - r_2^*e_1^*\omega) \\
                & = 0,
\end{align*}
where the last equality holds since $e_0\circ r_1 = e_0 \circ r_2$ and $e_1\circ r_1 = e_1 \circ r_2$.
\end{proof}

It is a consequence of \cite[Theorem 34.7]{KrieglMichor} and \cite[Proposition 42.3]{KrieglMichor} that the de Rham cohomology groups of $\PM$ and $\Lambda M$ are well-defined, and that the de Rham Theorem holds for these manifolds. Indeed, for any smoothly paracompact smooth manifold $M$ (finite or infinite dimensional) the composition of cochain maps
\begin{equation}
\label{Psi12}
\Psi_M: \Omega^*(M) \to C^*_{\rm smooth}(M;\RR) \to C^*(M;\RR),
\end{equation}
induces an isomorphism $H^*_{\rm dR}(M;\RR)\cong H^*(M;\RR)$ from de Rham cohomology to singular cohomology with real coefficients. Here the first map is given by integrating over smooth simplices, and the second is induced by a smoothing operator $C_*(M)\to C_*^{\rm smooth}(M)$ on singular chains (see \cite[Chapter 18]{Lee}). It is not difficult to check using Stokes' Theorem that these cochain maps are natural, in the sense that given a smooth map $f:M\to N$ we have $f^*\circ \Psi_N = \Psi_M\circ f^*$.

\begin{theorem}
Let $\omega \in \Omega^2(M)$ be closed and let $\alpha_\omega\in \Omega^1(\Lambda M)$  be given as in Proposition \ref{PropLoopform}. Let
$$\delta: H^1(\Lambda M;\RR) \to H^2(P_2M;\RR)$$ denote the connecting homomorphism of the Mayer-Vietoris sequence from above for real coefficients. Then
$$\delta([\alpha_\omega])=p_2^*[\bar\omega] , $$
where we identify the de Rham cohomology classes $[\alpha_\omega]$ and $[\bar\omega]$ with their corresponding singular cohomology classes under the above-mentioned canonical isomorphisms.
\end{theorem}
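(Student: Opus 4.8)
The plan is to trace a de Rham cohomology class through the explicit construction of the Mayer--Vietoris connecting homomorphism, using the fact that $P_2M$ is a double mapping cylinder. Recall that $\delta$ is defined as follows: given a closed $1$-form $\alpha_\omega \in \Omega^1(\Lambda M)$ representing a class in $H^1(\Lambda M;\RR)$, one writes $\alpha_\omega = r_1^*\beta_\omega - r_2^*\beta_\omega$ (which is possible by the very definition of $\alpha_\omega$ in Proposition~\ref{PropLoopform}), lifts the two summands to forms on the two copies of $\PM$ inside $P_2M$, takes the difference of their exterior derivatives, and observes that this patched form descends to a closed $2$-form on $P_2M$ whose class is $\delta([\alpha_\omega])$. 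The point is that $\beta_\omega \in \Omega^1(\PM)$ is a \emph{global} $1$-form on each copy of $\PM$, and by Proposition~\ref{PropPathform} its exterior derivative is $d\beta_\omega = \pi^*\bar\omega$. Since the two copies of $\PM$ map into $M\times M$ via $\pi$ compatibly with $p_2:P_2M\to M\times M$, the patched $2$-form on $P_2M$ is exactly $p_2^*\bar\omega$. This is the heart of the argument.

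More concretely, first I would set up the double mapping cylinder model $P_2M = \PM \sqcup (\Lambda M\times[0,1]) \sqcup \PM / {\sim}$ with the two ends glued via $r_1$ and $r_2$, together with its standard open cover by $V_1$ (the first copy of $\PM$ thickened into the cylinder) and $V_2$ (the second copy thickened into the cylinder), so that $V_1\cap V_2 \simeq \Lambda M$. Then I would recall the explicit formula for the Mayer--Vietoris connecting map in de Rham cohomology: pick a partition of unity $\{\rho_1,\rho_2\}$ subordinate to $\{V_1,V_2\}$; given a closed form $\eta$ on $V_1\cap V_2$, the form $\rho_2\,\eta$ extends by zero to $V_1$ and $-\rho_1\,\eta$ extends by zero to $V_2$, and these glue to a global form on $P_2M$ whose exterior derivative represents $\delta[\eta]$ (up to the usual sign convention). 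I would apply this with $\eta = \alpha_\omega$ restricted to $V_1\cap V_2$, and identify $V_1\cap V_2$ with a neighbourhood of $\Lambda M$ where $\alpha_\omega = r_1^*\beta_\omega - r_2^*\beta_\omega$ makes sense as the difference of restrictions of the globally defined forms $i_1^*$-pullback of $\beta_\omega$ and $i_2^*$-pullback of $\beta_\omega$.

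The key computation then runs as follows. On $V_1$ the form $\beta_\omega$ (pulled back from the first copy of $\PM$) is globally defined with $d\beta_\omega = \pi^*\bar\omega = p_2^*\bar\omega|_{V_1}$; similarly on $V_2$. The difference $r_1^*\beta_\omega - r_2^*\beta_\omega = \alpha_\omega$ is what we are feeding into $\delta$, and the MV recipe produces the global form $\theta$ on $P_2M$ which equals $\beta_\omega - \rho_1\alpha_\omega$ on $V_1$ and $\beta_\omega + \rho_2\alpha_\omega$ on $V_2$ (appropriately interpreted), patched across the overlap. A direct check shows these agree on $V_1\cap V_2$ because $(\beta_\omega - \rho_1\alpha_\omega) - (\beta_\omega + \rho_2\alpha_\omega) = -(\rho_1+\rho_2)\alpha_\omega = -\alpha_\omega = -(r_1^*\beta_\omega - r_2^*\beta_\omega)$, which is precisely the discrepancy between the two local expressions for $\beta_\omega$ on the overlap. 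Hence $d\theta$ is a global closed $2$-form representing $\delta([\alpha_\omega])$, and on each $V_\nu$ we have $d\theta = d\beta_\omega = p_2^*\bar\omega$ (the $\rho_i\alpha_\omega$ terms contribute forms whose derivatives patch to zero since $\alpha_\omega$ is closed and $\rho_1 + \rho_2 = 1$). Therefore $d\theta = p_2^*\bar\omega$ on the nose, giving $\delta([\alpha_\omega]) = p_2^*[\bar\omega]$ in de Rham cohomology, and the naturality of the de Rham isomorphism $\Psi$ (equation~(\ref{Psi12})) transports this identity to singular cohomology.

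The main obstacle is purely technical rather than conceptual: one must be careful that the double mapping cylinder $P_2M$ is \emph{not} a smooth manifold (as noted in the introduction), so the Mayer--Vietoris sequence we are using is the singular one, and the "forms" $\theta$ and $p_2^*\bar\omega$ on $P_2M$ a priori live only on the manifold pieces $V_1, V_2$ and the cylinder $\Lambda M\times[0,1]$. I would handle this by working with the singular Mayer--Vietoris sequence throughout, applying $\Psi$ (from (\ref{Psi12})) on each manifold piece $\PM$ and $\Lambda M$ where de Rham theory is valid, and checking that the resulting singular cochains on $P_2M$ assemble compatibly — the commuting-up-to-homotopy triangles in diagram~(\ref{pushout}) can be rigidified as remarked after that diagram, so that $p_2\circ i_\nu = \Delta\circ e$ and the pullbacks match strictly. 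The bookkeeping of signs in the connecting homomorphism and of the partition of unity is the only place requiring genuine care; the geometric content is entirely captured by the identity $d\beta_\omega = \pi^*\bar\omega$ of Proposition~\ref{PropPathform}.
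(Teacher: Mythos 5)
Your first and last paragraphs are essentially the paper's proof: one computes $\delta$ via the snake lemma for the short exact sequence of singular cochain complexes
$0\to C^*(P_2M;\RR)\to C^*(\PM;\RR)\oplus C^*(\PM;\RR)\to C^*(\Lambda M;\RR)\to 0$,
observes that $(\beta_\omega,\beta_\omega)$ is a lift of $\alpha_\omega$ under $r_1^*-r_2^*$ whose coboundary is $(\pi^*\bar\omega,\pi^*\bar\omega)=(i_1^*\oplus i_2^*)(p_2^*\bar\omega)$ by Proposition \ref{PropPathform} and the strict commutativity $\pi=p_2\circ i_\nu$, and then transports everything from differential forms to singular cochains using the naturality of the maps $\Psi$ from (\ref{Psi12}). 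Your handling of the fact that $P_2M$ is not a manifold (work with the singular Mayer--Vietoris sequence, apply $\Psi$ only on the genuine manifolds $\PM$ and $\Lambda M$) is also exactly the paper's.

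However, the ``key computation'' in your third paragraph is both unnecessary and incorrect as written. If $\theta$ really were a single global $1$-form (or global $1$-cochain) on $P_2M$, then $[d\theta]=0$ in $H^2(P_2M;\RR)$, so $d\theta$ cannot represent $\delta([\alpha_\omega])=p_2^*[\bar\omega]$, which is precisely the class you later need to be nonzero. The genuine partition-of-unity recipe produces the \emph{pair} $\bigl(\rho_2\alpha_\omega,\,-\rho_1\alpha_\omega\bigr)$, whose componentwise exterior derivatives agree on the overlap and glue to a closed form that is not globally exact; moreover $d(\rho_i\alpha_\omega)=d\rho_i\wedge\alpha_\omega$ is not zero, so the claim that ``$d\theta=p_2^*\bar\omega$ on the nose'' fails --- the two sides differ by exactly these cutoff terms (and, as a smaller point, $\rho_1\alpha_\omega$ extends by zero to $V_2$ rather than to $V_1$, so your formula for $\theta|_{V_1}$ is ill-posed). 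None of this damages the proof, because the snake-lemma description in your first paragraph already does the job with the lift $(\beta_\omega,\beta_\omega)$ and no partition of unity at all; but you should delete or repair the third paragraph before writing this up.
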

\begin{proof}
By definition, the connecting homomorphism is obtained using the snake lemma in the first two rows of the following diagram:
$$
\xymatrix{
0 \ar[r] & C^1(P_2M;\RR) \ar[r]^{i_1^*\oplus i_2^*\qquad \quad} \ar[d]^{\partial}& C^1(\PM;\RR) \oplus C^1(\PM;\RR)\ar[d]^{\partial\oplus \partial}\ar[r]^{\qquad \quad r_1^*-r_2^*} & C^1(\Lambda M;\RR)\ar[d]^{\partial} \ar[r] & 0 \\
0 \ar[r] & C^2(P_2M;\RR) \ar[r]^{i_1^*\oplus i_2^*\qquad \quad} & C^2(\PM;\RR) \oplus C^2(\PM;\RR) \ar[r]^{\qquad \quad r_1^*-r_2^*} & C^2(\Lambda M;\RR) \ar[r] & 0 \\
 & C^2(M \times M;\RR) \ar[u]^{p_2^*} \ar[ur]_{\pi^* \oplus \pi^*}& & &
}
$$
where $C^i(-;\RR)$ denotes singular cochains with real coefficients and $\partial$ denotes the respective singular codifferential. The (strict) commutativity of the bottom triangle follows from the remark made immediately after diagram (\ref{pushout}).

To find a cocycle $a \in C^1(\Lambda M;\RR)$ whose cohomology class is mapped to the class of $p_2^*c$ by the connecting homomorphism for a given cocycle $c \in C^2(M \times M;\RR)$, it thus suffices to find such an $a$ for which there exist $b_1, b_2 \in C^1(\PM;\RR)$ such that $a = r_1^*b_1 - r_2^*b_2$ and
$\partial b_1 = \partial b_2 = \pi^*c$.

Passing to differential forms, we have seen in Propositions \ref{PropPathform} and \ref{PropLoopform} that $d\beta_\omega = \pi^*
\bar\omega$ and $\alpha_\omega = r_1^*\beta_\omega - r_2^*\beta_\omega$.  The result now follows from the naturality of the de Rham cochain equivalences $\Psi_{M\times M}$, $\Psi_{\PM}$ and $\Psi_{\Lambda M}$.
\end{proof}

\begin{cor}
\label{Corwgt2}
Let $\omega \in \Omega^2(M)$ be closed and let $\alpha_\omega\in \Omega^1(\Lambda M)$  be given as in Proposition \ref{PropLoopform}. If $\alpha_\omega$ is exact then $\wgt([\bar\omega]) \geq 2$.
\end{cor}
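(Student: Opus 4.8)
The plan is to read the corollary off directly from the theorem immediately above together with Proposition~\ref{wgt2}, so the argument will be short. First I would unwind the hypothesis: if $\alpha_\omega\in\Omega^1(\Lambda M)$ is exact, say $\alpha_\omega=df$ for some $f\in C^\infty(\Lambda M,\RR)=\Omega^0(\Lambda M)$, then its de Rham cohomology class vanishes. Since $\Lambda M$ is a smoothly paracompact (Fr\'echet) manifold, the comparison map $\Psi_{\Lambda M}$ of (\ref{Psi12}) is an isomorphism, so the corresponding singular cohomology class $[\alpha_\omega]\in H^1(\Lambda M;\RR)$ is also zero.

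Next I would invoke the theorem above, which identifies $\delta([\alpha_\omega])=p_2^*[\bar\omega]$ in $H^2(P_2M;\RR)$ under exactly these identifications. Since $\delta$ is a homomorphism and $[\alpha_\omega]=0$, we obtain $p_2^*[\bar\omega]=0$; that is, $[\bar\omega]\in\ker\big[p_2^*\colon H^2(M\times M;\RR)\to H^2(P_2M;\RR)\big]$. Applying Proposition~\ref{wgt2} with the constant coefficient system $A=\RR$ then gives $\wgt([\bar\omega])\ge 2$, which is the claim. (One can also recover via diagram~(\ref{ev}) that $[\bar\omega]$ is a zero-divisor, consistent with the remark following the definition of $\bar\omega$, but this is not needed here.)

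The only points deserving care are bookkeeping ones, and all of them have been dealt with in the material above: the passage between de Rham and singular cohomology is handled by the naturality of the cochain equivalences $\Psi_M$, $\Psi_{\PM}$, $\Psi_{\Lambda M}$, and the fact that the $P_2M$ appearing in the Mayer--Vietoris sequence may be taken to be the fiberwise join $p_2\colon P_2M\to M\times M$ to which Proposition~\ref{wgt2} applies follows from the homotopy equivalences $\PM\simeq PM$ and $\Lambda M\simeq LM$ being compatible with $r_1,r_2$, hence inducing a homotopy equivalence of the associated double mapping cylinders over $M\times M$. Consequently I do not anticipate any genuine obstacle: the corollary is a formal consequence of the theorem above and Proposition~\ref{wgt2}, and it is the step that, combined with the cup-length estimates from Theorem~\ref{ThmTCwgt}, will feed into the proof of Theorem~\ref{ThmAtor}.
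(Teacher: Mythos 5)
Your proposal is correct and follows exactly the route the paper intends: exactness gives $[\alpha_\omega]=0$, the preceding theorem then forces $p_2^*[\bar\omega]=\delta([\alpha_\omega])=0$, and Proposition~\ref{wgt2} yields $\wgt([\bar\omega])\ge 2$. The paper leaves this corollary without an explicit proof precisely because it is this immediate consequence, and your additional remarks on the de Rham/singular comparison and on identifying the smooth and continuous models of $P_2M$ are exactly the bookkeeping points the paper handles via the naturality of $\Psi$ and Lemma~\ref{lem:smoothTC}.
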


\section{The weight of the cohomology class of a $c$-symplectic form}
\label{symp}

The considerations for closed $2$-forms from the previous section are of particular interest for $c$-symplectic manifolds. Non-vanishing of the $n$-th power of the $c$-symplectic class is crucial in the proof of Theorem \ref{ThmAtor}.

\begin{prop}
\label{PropSymp}
Let $(M,\omega)$ be a $c$-symplectic manifold. If $[\alpha_\omega]=0 \in H^1(\Lambda M;\RR)$, then $\TC(M)=2 \dim M$.
\end{prop}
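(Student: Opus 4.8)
The plan is to combine Corollary \ref{Corwgt2} with the multiplicativity of $\TC$-weight (Theorem \ref{ThmTCwgt}(1)) and the $c$-symplectic condition to force the zero-divisors cup-length estimate up to its maximal value $2n = 2\dim M$. The upper bound $\TC(M) \leq 2\dim M$ is already available: $M$ is a closed manifold of dimension $2n$, hence homotopy equivalent to a CW complex of dimension $2n$, and the general dimensional bound $\TC(X) \leq 2\dim X/r$ with $r=1$ gives $\TC(M) \leq 2 \cdot 2n$... but more carefully, one uses that for a closed $2n$-manifold $\TC(M) \leq 4n$ from the CW bound, so the real work is the lower bound, which must produce $\TC(M) \geq 2n$ on the nose when combined with a separate easy argument, or directly $\geq 4n$? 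Let me reconsider: $\dim M = 2n$, so we want $\TC(M) = 2n$, and the dimensional upper bound gives $\TC(M) \leq 2\dim M = 4n$. That is not tight, so we genuinely need a lower bound of $2n$; but wait, Theorem \ref{ThmAtor} claims $\TC(M) = 2\dim M = 4n$ for atoroidal $M$. So in fact the upper bound $\TC(M) \leq 4n$ IS the dimensional bound, and we need the matching lower bound $\TC(M) \geq 4n$.

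So the core of the argument is to exhibit a nonzero cohomology class in $H^*(M\times M)$ of $\TC$-weight at least $4n$. The natural candidate is $[\bar\omega]^{2n} \in H^{4n}(M\times M;\RR)$. By Corollary \ref{Corwgt2}, the hypothesis $[\alpha_\omega]=0$ gives $\wgt([\bar\omega]) \geq 2$, and then Theorem \ref{ThmTCwgt}(1) applied $2n$ times yields $\wgt([\bar\omega]^{2n}) \geq 4n$. By Theorem \ref{ThmTCwgt}(2), it then suffices to show $[\bar\omega]^{2n} \neq 0$ in $H^{4n}(M\times M;\RR)$. This is where the $c$-symplectic hypothesis enters: I would expand $[\bar\omega]^{2n} = (1\times[\omega] - [\omega]\times 1)^{2n}$ by the binomial theorem in the ring $H^*(M\times M;\RR) \cong H^*(M;\RR)\otimes H^*(M;\RR)$ (Künneth over $\RR$), and pick out the middle term involving $\binom{2n}{n}[\omega]^n \times [\omega]^n$ up to sign. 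Since $[\omega]^n \neq 0$ by the $c$-symplectic condition and $[\omega]^n \in H^{2n}(M;\RR)$ lies in the top degree, the class $[\omega]^n\times[\omega]^n$ is a nonzero element of $H^{4n}(M\times M;\RR) \cong H^{2n}(M)\otimes H^{2n}(M)$, and it cannot be cancelled by other binomial terms because those live in different Künneth bidegrees $H^{2k}(M)\otimes H^{2(2n-k)}(M)$ with $k\neq n$ (and for $k < n$ or $k > 2n$ one of the factors would have to exceed the top degree $2n$, but in any case the bidegree separation is what does the job). Hence $[\bar\omega]^{2n} \neq 0$.

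Putting the pieces together: $[\bar\omega]^{2n}$ is a nonzero class of $\TC$-weight $\geq 4n = 2\dim M$, so $\TC(M) \geq 2\dim M$ by Theorem \ref{ThmTCwgt}(2); combined with the dimensional upper bound $\TC(M) \leq 2\dim M$ (valid since $M$ is a closed, hence finite-dimensional CW, manifold), we conclude $\TC(M) = 2\dim M$.

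I do not expect any step to be a serious obstacle here — this proposition is the formal payoff of the earlier sections. The only point requiring a little care is the Künneth/binomial bookkeeping to confirm that the middle term survives; since we are working with real coefficients the Künneth theorem is a clean tensor-product isomorphism, and the grading argument makes the non-vanishing transparent. One should also double-check that $\binom{2n}{n}$ is invertible in $\RR$, which is immediate, so no characteristic issues arise. The genuinely substantive input — that $[\alpha_\omega]=0$, i.e. that $\alpha_\omega$ is exact — is supplied as a hypothesis here and is what the subsequent section must establish for atoroidal forms.
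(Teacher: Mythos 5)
Your proposal is correct and follows essentially the same route as the paper: apply Corollary \ref{Corwgt2} to get $\wgt([\bar\omega])\geq 2$, use the multiplicativity of $\TC$-weight to get $\wgt([\bar\omega]^{2n})\geq 4n$, observe that $[\bar\omega]^{2n}\neq 0$ because the K\"unneth summand $(-1)^n\binom{2n}{n}[\omega]^n\times[\omega]^n$ survives, and conclude with Theorem \ref{ThmTCwgt}(2) together with the dimensional upper bound. The only difference is cosmetic: your opening paragraph wavers on whether the target is $2n$ or $4n$ before correctly settling on $\TC(M)=2\dim M=4n$.
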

\begin{proof}
Let $2n = \dim M$. As before let $\bar\omega=1\times \omega - \omega \times 1 \in \Omega^2(M\times M)$, and let $[\bar\omega]=1\times[\omega]-[\omega]\times 1 \in H^2(M\times M;\RR)$ denote its cohomology class. By assumption, $[\omega]^n\in H^{2n}(M;\RR)$ is nonzero. It follows that $[\bar\omega]^{2n}\neq 0$, since it contains the nontrivial summand
$$(-1)^n \binom{2n}{n} [\omega]^n \times [\omega]^n.$$
Thus, using Theorem \ref{ThmTCwgt} (1) and Corollary \ref{Corwgt2} we derive that
$$\wgt\left([\bar\omega]^{2n} \right) \geq 2n \cdot \wgt([\bar\omega]) \geq 4n . $$
Finally, Theorem \ref{ThmTCwgt} (2) implies that $\TC(M) \geq 4n$. The opposite inequality $\TC(M) \leq 4n$ follows from the dimensional upper bound from the introduction.
\end{proof}

Combining Proposition \ref{PropSymp} and the atoroidality condition gives Theorem \ref{ThmAtor}.

\begin{proof}[Proof of Theorem \ref{ThmAtor}]
In light of Proposition \ref{PropSymp} it only remains to show that the condition of $\omega\in \Omega^2(M)$ being atoroidal implies that $[\alpha_\omega]=0 \in H^1(\Lambda M;\RR)$. For this it suffices to check that integrating $\alpha_\omega$ over a smooth $1$-cycle gives zero. Hence it suffices to show that
\[
\int_{S^1} c^*\alpha_\omega=0\qquad\mbox{for all }c \in C^\infty(S^1,\Lambda M).
 \]
 Given such $c$, we define $\tilde{c}: T^2 \to M$ by $\tilde{c}(s,t)= (c(s))(t)$ for all $s, t \in S^1$ and derive from \cite[Theorem 42.14]{KrieglMichor} that $\tilde{c}$ is smooth. Using Fubini's theorem we compute that
$$ \int_{S^1} c^*\alpha_\omega = \int_0^1 (\alpha_\omega)_{c(s)}[\dot{c}(s)] \ ds= \int_0^1 \int_0^1 \omega_{\tilde{c}(s,t)}(\partial_t\tilde{c}(s,t),\partial_s\tilde{c}(s,t)) \ dt \ ds = \int_{T^2} \tilde{c}^*\omega = 0 , $$
since $(M,\omega)$ is symplectically atoroidal. Since $c$ was chosen arbitrarily, it follows that $[\alpha_\omega]=0$ and Proposition \ref{PropSymp} yields the claim.
\end{proof}

\section{Applications}
\label{appl}

In this section we give some general settings in which our main result Theorem \ref{ThmAtor} applies.

\begin{theorem}
\label{ThmHypFund}
Let $(M,\omega)$ be a $c$-symplectically aspherical manifold, such that $\pi_1(M)$ does not contain $\ZZ\oplus\ZZ$ as a subgroup. Then $\TC(M)=2 \dim M$.
\end{theorem}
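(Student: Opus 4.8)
The plan is to deduce Theorem~\ref{ThmHypFund} from Theorem~\ref{ThmAtor} by showing that the hypotheses here imply $c$-symplectic atoroidality. By Theorem~\ref{ThmAtor}, it suffices to verify that $\int_{T^2} f^*\omega = 0$ for every smooth map $f:T^2\to M$. Since $(M,\omega)$ is $c$-symplectically aspherical, the class $[\omega]\in H^2(M;\RR)$ pairs trivially with every spherical homology class; the task is to upgrade this to triviality against every \emph{toroidal} class, using the constraint on $\pi_1(M)$.

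First I would reduce to a statement purely about $\pi_1(M)$. Let $f:T^2\to M$ be smooth and let $\phi=f_*:\pi_1(T^2)=\ZZ\oplus\ZZ\to\pi_1(M)$ be the induced homomorphism. Because $T^2$ is aspherical, $f$ factors up to homotopy through a map $T^2\to B\pi_1(M)$, and $\int_{T^2}f^*\omega$ depends only on the image of the fundamental class $[T^2]$ in $H_2(B\pi_1(M);\ZZ)$ under $B\phi$, paired with the image of $[\omega]$ in $H^2(B\pi_1(M);\RR)$ (the latter makes sense since asphericity forces $[\omega]$ to be pulled back from $B\pi_1(M)$, as the classifying map $M\to B\pi_1(M)$ is $2$-connected on the relevant range). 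So the integral vanishes as soon as $B\phi$ kills $[T^2]\in H_2(\ZZ^2;\ZZ)\cong\ZZ$. Now the key group-theoretic point: the image $\phi(\ZZ\oplus\ZZ)$ is an abelian quotient of $\ZZ\oplus\ZZ$, hence either has rank $\le 1$ or is isomorphic to $\ZZ\oplus\ZZ$. The second case is excluded by hypothesis (no $\ZZ\oplus\ZZ$ subgroup), so $\phi$ factors through $\ZZ$ or a finite cyclic group; in either case $\phi$ factors through a group of cohomological dimension $\le 1$ (for $\ZZ$) or a finite group (whose rational homology vanishes above degree $0$), and in both cases $H_2$ of the target with $\RR$-coefficients is zero, forcing $B\phi_*[T^2]=0$ rationally and hence $\int_{T^2}f^*\omega=0$.

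The cleanest way to organize the last step is: the map $f:T^2\to M$ factors up to homotopy as $T^2\to B\mathrm{im}(\phi)\to B\pi_1(M)\to M$ (the last arrow a homotopy section of the $2$-connected classifying map on $2$-skeleta, or simply work with $H_2$ directly), and since $\mathrm{im}(\phi)$ is a finitely generated abelian group of rank $\le 1$ we have $H_2(B\mathrm{im}(\phi);\RR)=0$. Therefore $f^*[\omega]$, being the pullback of a class from $H^2(B\pi_1(M);\RR)$ through a map that factors through $B\mathrm{im}(\phi)$, evaluates to zero on $[T^2]$. This gives atoroidality, and Theorem~\ref{ThmAtor} then yields $\TC(M)=2\dim M$.

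The main obstacle is the passage from ``$[\omega]$ is symplectically aspherical'' to ``$[\omega]$ is pulled back from $B\pi_1(M)$''. This is standard --- the classifying map $c:M\to B\pi_1(M)$ induces an isomorphism on $H^1$ and an injection on $H^2$ with image the classes vanishing on spherical homology (this is exactly the low-degree exact sequence in group cohomology / the five-term exact sequence), and $c$-symplectic asphericity is precisely the condition that $[\omega]$ lies in that image --- but it should be stated carefully, with real coefficients throughout. Alternatively one can cite Borat's result directly, since the hypothesis here is literally Borat's criterion; but giving the self-contained homological argument as above seems preferable and is short. Everything else is routine.
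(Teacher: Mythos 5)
Your proof is correct and follows the same overall strategy as the paper: show that the hypotheses force $(M,\omega)$ to be $c$-symplectically atoroidal, then invoke Theorem \ref{ThmAtor}. The difference lies in how atoroidality is established. The paper simply cites the proof of Borat's Theorem 4 --- if $\omega$ is aspherical and $\int_{T^2}f^*\omega\neq 0$ then $f_*:\pi_1(T^2)\to\pi_1(M)$ is injective, which is impossible when $\pi_1(M)$ contains no $\ZZ\oplus\ZZ$ --- whereas you give a self-contained homological argument: asphericity places $[\omega]$ in the image of $c^*:H^2(B\pi_1(M);\RR)\to H^2(M;\RR)$ (via the Hopf exact sequence and universal coefficients over $\RR$), the composite $c\circ f$ is homotopic to $B\phi$ with $\phi=f_*$, and since $\mathrm{im}(\phi)$ is a finitely generated abelian group of rank at most one, $H_2(B\,\mathrm{im}(\phi);\RR)=0$, so the relevant pairing vanishes. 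This is in effect a proof of Borat's lemma in contrapositive form; it buys self-containedness at the cost of a little homological algebra, while the paper buys brevity by citation. Two phrasings in your write-up should be tightened: a map $f:T^2\to M$ does not literally factor through $B\pi_1(M)$ (what you actually use, correctly, is only that $c\circ f\simeq B\phi$ and that $\int_{T^2}f^*\omega=\langle u,(B\phi)_*[T^2]\rangle$ where $c^*u=[\omega]$), and $\mathrm{im}(\phi)$ need not be $\ZZ$ or finite cyclic --- it can be $\ZZ\oplus\ZZ/n$ --- but your final formulation (rank $\leq 1$ finitely generated abelian, hence $H_2$ with real coefficients vanishes) covers this case correctly, so neither point is a genuine gap.
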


\begin{proof}
As shown in the proof of \cite[Theorem 4]{Borat}, if $f:T^2\to M$ is a smooth map such that $\int_{T^2} f^*\omega \neq 0$, and $\omega$ is aspherical, then $f_*:\pi_1(T^2)\to \pi_1(M)$ is injective. Hence the given conditions imply that $(M,\omega)$ is $c$-symplectically atoroidal, so that Theorem \ref{ThmAtor} applies.
\end{proof}

Note that this result improves by one the bound given in \cite[Theorem 8]{FarberMescher}. It applies in particular to the case of hyperbolic fundamental groups.

\begin{cor}
Let $(M,\omega)$ be a $c$-symplectically aspherical manifold, such that $\pi_1(M)$ is hyperbolic. Then $\TC(M)=2 \dim M$.
\end{cor}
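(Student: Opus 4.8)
The plan is to deduce this corollary directly from Theorem~\ref{ThmHypFund} by invoking a standard group-theoretic fact about hyperbolic groups. The only thing to verify is that the hypothesis of Theorem~\ref{ThmHypFund} --- namely that $\pi_1(M)$ contains no subgroup isomorphic to $\ZZ\oplus\ZZ$ --- follows from $\pi_1(M)$ being (word-)hyperbolic in the sense of Gromov.

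First I would recall the relevant fact: in a hyperbolic group, every abelian subgroup is virtually cyclic; in particular a hyperbolic group cannot contain $\ZZ\oplus\ZZ$ as a subgroup. (One reference is Bridson--Haefliger, \emph{Metric Spaces of Non-Positive Curvature}, Chapter~III.$\Gamma$; see also Gromov's original essay.) The quickest self-contained argument is via translation lengths or the flat torus alternative: $\ZZ\oplus\ZZ$ would be a quasi-isometrically embedded flat in the Cayley graph, contradicting $\delta$-hyperbolicity, since a bi-infinite geodesic in one $\ZZ$-factor together with translates under the other factor produces arbitrarily fat geodesic triangles (quadrilaterals that are not $\delta$-thin). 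Alternatively, one notes that a hyperbolic group has only cyclic centralizers up to finite index, so no rank-two free abelian subgroup can survive.

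With that fact in hand, the proof is a one-liner: if $(M,\omega)$ is $c$-symplectically aspherical and $\pi_1(M)$ is hyperbolic, then $\pi_1(M)$ does not contain $\ZZ\oplus\ZZ$, so Theorem~\ref{ThmHypFund} applies and gives $\TC(M) = 2\dim M$. I do not expect any real obstacle here --- the content is entirely imported from Theorem~\ref{ThmHypFund} (and ultimately Theorem~\ref{ThmAtor}), and the group theory is classical. The only mild decision is how much detail to include about why hyperbolic groups omit $\ZZ\oplus\ZZ$; since this is well known, I would simply cite a standard reference rather than reproduce the argument. If one wanted to be slightly more general, the same proof shows the conclusion holds whenever $\pi_1(M)$ belongs to any class of groups known to omit $\ZZ\oplus\ZZ$ (e.g.\ torsion-free one-relator groups with certain conditions, or groups acting suitably on trees), but for the corollary as stated, invoking hyperbolicity and Theorem~\ref{ThmHypFund} is all that is needed.
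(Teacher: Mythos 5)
Your proof is correct and matches the paper's (implicit) argument exactly: the corollary is stated in the paper without proof, as an immediate consequence of Theorem~\ref{ThmHypFund} together with the classical fact that hyperbolic groups contain no $\ZZ\oplus\ZZ$ subgroup. Citing a standard reference for that group-theoretic fact, as you suggest, is the appropriate level of detail.
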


\begin{cor}
Let $(M,\omega)$ be a $c$-symplectic manifold which admits a Riemannian metric of negative sectional curvature. Then $\TC(M)=2 \dim M$.
\end{cor}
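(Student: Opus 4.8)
The plan is to derive this corollary from Theorem \ref{ThmHypFund}, by checking that a closed manifold admitting a metric of negative sectional curvature is $c$-symplectically aspherical and has a fundamental group containing no subgroup isomorphic to $\ZZ\oplus\ZZ$.

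First I would invoke the Cartan--Hadamard theorem: since $M$ is a closed manifold equipped with a complete metric of negative (in particular non-positive) sectional curvature, its universal cover $\widetilde M$ is diffeomorphic to Euclidean space, hence contractible, so $M$ is aspherical. It follows that $(M,\omega)$ is $c$-symplectically aspherical: any smooth map $f\colon S^2\to M$ lifts through the universal covering because $S^2$ is simply connected, and since $\widetilde M$ is contractible the lift---hence $f$ itself---is null-homotopic, so $f^*\omega$ is exact and $\int_{S^2} f^*\omega=0$.

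Next I would rule out $\ZZ\oplus\ZZ\le\pi_1(M)$. The most direct route is Preissmann's theorem, which asserts that every non-trivial abelian subgroup of the fundamental group of a closed manifold of strictly negative sectional curvature is infinite cyclic; in particular $\pi_1(M)$ contains no copy of $\ZZ\oplus\ZZ$. (Alternatively, compactness of $M$ bounds its curvature above by some $-\kappa<0$, so after rescaling $\widetilde M$ is a CAT$(-1)$ space on which $\pi_1(M)$ acts geometrically; by the \v{S}varc--Milnor lemma together with Gromov's criterion $\pi_1(M)$ is word-hyperbolic, and one may invoke the preceding corollary instead of Theorem \ref{ThmHypFund}.) Having verified $c$-symplectic asphericity and the absence of $\ZZ\oplus\ZZ$ in $\pi_1(M)$, Theorem \ref{ThmHypFund} immediately yields $\TC(M)=2\dim M$.

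I do not expect any real obstacle here: the statement is a formal consequence of Theorem \ref{ThmHypFund} once the two standard facts from Riemannian geometry---the Cartan--Hadamard theorem and Preissmann's theorem---are quoted. The only point worth a word of care is that the hypothesis ``$c$-symplectic'' already builds in that $M$ is closed and even-dimensional, so that Preissmann's theorem applies on the nose, without any further assumptions.
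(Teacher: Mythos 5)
Your proposal is correct and follows exactly the paper's argument: Cartan--Hadamard gives asphericity (hence $c$-symplectic asphericity), Preissmann rules out $\ZZ\oplus\ZZ$ subgroups of $\pi_1(M)$, and Theorem \ref{ThmHypFund} then applies. The extra detail you supply (why asphericity forces $\int_{S^2}f^*\omega=0$) is a useful elaboration of what the paper leaves implicit, but the route is the same.
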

\begin{proof}
By the Theorems of Cartan--Hadamard and Preissman, $M$ is aspherical and $\pi_1(M)$ does not contain any subgroups isomorphic to $\ZZ\oplus\ZZ$. Thus Theorem \ref{ThmHypFund} applies.
\end{proof}

We next discuss a related geometric condition which implies symplectic atoroidality.

\begin{definition}
Let $(M,\omega)$ be a symplectic manifold and let $p:\tilde{M}\to M$ be its universal cover. The pair $(M,\omega)$ is called \emph{symplectically hyperbolic} if there exists $\theta \in \Omega^1(\tilde{M})$, such that $p^*\omega = d\theta$ and such that
$$\sup_{q \in \tilde{M}} \|\theta_q \|_q < +\infty, $$
where $\| \cdot \|_q$ denotes the norm on $T^*_q\tilde{M}$ that is induced by the lift of a chosen Riemannian metric on $M$.
\end{definition}

This notion was introduced by L. Polterovich in \cite{Polt}, generalizing the concept of K\"ahler hyperbolicity introduced by M. Gromov in \cite{GromovHyper}. It has further been discussed by J. K\c{e}dra in \cite{KedraHyp} and by G. Paternain under the name of \emph{weakly exact forms with bounded primitive} in \cite{PaterMagnetic}. The simplest examples of symplectically hyperbolic manifolds are oriented surfaces of genus at least two together with their volume forms. The connection to our results is given by the following observation.

\begin{lemma}[{\cite[Proposition 1.9]{KedraHyp}, \cite[Lemma 2.3]{MerryCharge}}]
\label{LemmaHypAtor}
Symplectically hyperbolic manifolds are symplectically atoroidal.
\end{lemma}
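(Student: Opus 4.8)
The plan is to reduce the statement to an elementary application of Stokes' theorem on $\RR^2$, exploiting the discrepancy between the quadratic growth of the area of a large square and the linear growth of the length of its boundary. Fix a smooth map $f\colon T^2\to M$; the goal is to show $\int_{T^2}f^*\omega=0$. Writing $T^2=\RR^2/\ZZ^2$ with quotient map $\mathrm{pr}\colon\RR^2\to T^2$, set $F=f\circ\mathrm{pr}\colon\RR^2\to M$. Since $F$ factors through $T^2$, the form $F^*\omega$ is $\ZZ^2$-invariant, so the change-of-variables formula gives $\int_{[0,1]^2}F^*\omega=\int_{T^2}f^*\omega=:I$, and more generally $\int_{[0,R]^2}F^*\omega=R^2I$ for every integer $R\ge 1$.

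Next I would lift to the universal cover of $M$. As $\RR^2$ is simply connected, $F$ admits a smooth lift $\tilde F\colon\RR^2\to\tilde M$ with $p\circ\tilde F=F$, and then $p^*\omega=d\theta$ yields $F^*\omega=\tilde F^*p^*\omega=d(\tilde F^*\theta)$, so $\eta:=\tilde F^*\theta\in\Omega^1(\RR^2)$ is a global primitive of $F^*\omega$. The key point is that $\eta$ is bounded: equip $T^2$ with the flat metric for which $\mathrm{pr}$ is a local isometry, $\RR^2$ with the Euclidean metric, and $\tilde M$ with the lift $\tilde g$ of the chosen metric on $M$. Since $\mathrm{pr}$ and $p$ are local isometries, $\|D\tilde F_x\|=\|DF_x\|=\|Df_{\mathrm{pr}(x)}\|$, which is bounded over all of $\RR^2$ by compactness of $T^2$, say by $C_1$; combined with the hypothesis $\sup_{\tilde M}\|\theta\|=:C_2<\infty$ this gives $\|\eta_x\|\le\|\theta_{\tilde F(x)}\|\,\|D\tilde F_x\|\le C_1C_2$ for all $x\in\RR^2$.

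Finally, applying Stokes' theorem to the square $[0,R]^2$ gives
$$R^2I=\int_{[0,R]^2}F^*\omega=\int_{[0,R]^2}d\eta=\int_{\partial[0,R]^2}\eta,$$
whose right-hand side has absolute value at most $4R\cdot C_1C_2$, since $\partial[0,R]^2$ consists of four segments of Euclidean length $R$. Dividing by $R^2$ and letting $R\to\infty$ forces $I=0$, which is the claim.

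I expect the only real care to be needed in the second step, namely the metric bookkeeping that converts the hypothesis ``$\theta$ has bounded norm on $\tilde M$'' into ``$\tilde F^*\theta$ has bounded norm on $\RR^2$''. One must arrange the metrics so that every covering map in sight is a local isometry, and observe that $\tilde F$ need not be $\ZZ^2$-equivariant, so $\eta$ itself is not periodic; however, only the boundedness of $\eta$ is used, and that survives regardless of the lack of periodicity. This also explains why working directly on $\RR^2$ is preferable to passing to the intermediate cover of $T^2$ determined by $\ker(f_*\colon\pi_1(T^2)\to\pi_1(M))$, which would split into finite- and infinite-cover cases. Everything else in the argument is formal.
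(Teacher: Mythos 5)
Your argument is correct: lifting $f\circ\mathrm{pr}\colon\RR^2\to M$ to the universal cover, pulling back the bounded primitive $\theta$, and comparing the quadratic growth of $\int_{[0,R]^2}F^*\omega$ with the linear growth of $\bigl|\int_{\partial[0,R]^2}\tilde F^*\theta\bigr|$ is exactly the standard proof. The paper itself gives no proof, deferring to K\c{e}dra and Merry, and your argument is essentially the one found in those cited sources.
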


Hence, Theorem \ref{ThmAtor} applies to all closed symplectically hyperbolic manifolds.

\begin{cor}
Let $\Sigma_g\to E\to M$ be an oriented surface bundle with fibre of genus $g\ge2$ over a closed symplectically hyperbolic manifold $(M,\omega)$. Then $\TC(E)=2 \dim(E)$.

In particular, if $E$ is the total space of an iterated sequence of oriented surface bundles, where the fiber of each iteration step and the base space of the first fibration are oriented surfaces of genus at least two, then $\TC(E)=2\dim E$.
\end{cor}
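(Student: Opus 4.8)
The plan is to show that the total space $E$ carries a symplectic form making it \emph{symplectically hyperbolic}, and then to invoke Lemma \ref{LemmaHypAtor} together with Theorem \ref{ThmAtor}. First I would produce a symplectic form on $E$ via Thurston's fibration construction: since the fibre $\Sigma_g$ has genus $g \ge 2$, the structure group (orientation-preserving diffeomorphisms) acts trivially on $H^2(\Sigma_g;\RR)\cong\RR$, so the fibre class extends and there is a closed $2$-form $\alpha \in \Omega^2(E)$ restricting to a positive area form on every fibre; then for $K > 0$ sufficiently large $\omega_E := \alpha + K\, p_E^*\omega \in \Omega^2(E)$ is symplectic, where $p_E: E \to M$ denotes the bundle projection.

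The heart of the argument is to check that $(E, \omega_E)$ is symplectically hyperbolic; this is essentially the content of K\c edra's work \cite{KedraHyp}, which I would cite, but the idea is as follows. Equip $E$ with a Riemannian metric that is the orthogonal sum of the $p_E$-pullback of a metric on $M$ on some horizontal distribution and a fibrewise metric on the vertical bundle, so that $dp_E$ is norm-nonincreasing; lift everything to the universal cover $q: \tilde E \to E$. As $\tilde E$ is simply connected, $p_E \circ q$ factors through a smooth lift $\tilde p: \tilde E \to \tilde M$ to the universal cover of $M$, and if $\theta \in \Omega^1(\tilde M)$ is the given bounded primitive of $\omega$ on $\tilde M$, then $\tilde p^*\theta$ is a bounded primitive of $q^*p_E^*\omega$ because $d\tilde p$ is norm-nonincreasing for the lifted metrics. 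For the remaining summand $q^*\alpha$, the fibres of $\tilde p$ are covers of $\Sigma_g$, on which the pulled-back area form admits a uniformly bounded primitive since a surface of genus $\ge 2$ is K\"ahler hyperbolic (its universal cover is $\mathbb{H}^2$, whose area form has the hyperbolic Liouville form as a bounded primitive); using the lifted connection one assembles these fibrewise primitives into a globally bounded primitive $\eta$ of $q^*\alpha$ on $\tilde E$. Then $q^*\omega_E = d(\eta + K\, \tilde p^*\theta)$ has a bounded primitive, so $(E,\omega_E)$ is symplectically hyperbolic, hence symplectically atoroidal by Lemma \ref{LemmaHypAtor}, and Theorem \ref{ThmAtor} gives $\TC(E) = 2\dim E$.

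For the ``in particular'' statement I would argue by induction on the number of iteration steps. Write $E = E_k \to E_{k-1} \to \cdots \to E_1 \to E_0$ with $E_0$ an oriented surface of genus $\ge 2$ and each $E_i \to E_{i-1}$ an oriented surface bundle with fibre of genus $\ge 2$. The base $E_0$ is symplectically hyperbolic by the model computation on $\mathbb{H}^2$ above, and if $E_{i-1}$ is symplectically hyperbolic then so is $E_i$ by the case already treated; hence $E = E_k$ is symplectically hyperbolic and the first part applies.

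The step I expect to be the main obstacle is the construction of a globally bounded primitive of $q^*\alpha$ on $\tilde E$: patching the bounded Liouville forms on the hyperbolic fibres into a primitive that stays uniformly bounded across $\tilde E$ requires a careful choice of connection and control of its curvature, and this is precisely where I would lean on \cite{KedraHyp} rather than reproduce the estimate. Everything else is either standard fibre-bundle topology (Thurston's construction) or a direct appeal to results already recalled in the paper.
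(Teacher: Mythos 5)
Your proposal follows exactly the paper's route: the paper simply cites K\c edra's result (\cite[Corollary 2.2]{KedraHyp}) that the total space of such a bundle is symplectically hyperbolic, then applies Lemma \ref{LemmaHypAtor} and Theorem \ref{ThmAtor}, with the same induction for the iterated case. Your additional sketch of the Thurston construction and the bounded-primitive patching is a reasonable outline of what lies inside K\c edra's proof, but since you (correctly) defer the hard estimate to \cite{KedraHyp}, the argument is logically identical to the paper's.
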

\begin{proof}
This follows from \cite[Corollary 2.2]{KedraHyp}, combined with Lemma \ref{LemmaHypAtor} and Theorem \ref{ThmAtor}.
\end{proof}

This is a far-reaching generalisation of the fact, easily obtained using zero-divisors cup-length estimates, that a finite product $\Sigma_{g_1}\times\cdots\times\Sigma_{g_n}$ of orientable surfaces of higher genus has
$$\TC(\Sigma_{g_1}\times \dots \times \Sigma_{g_n}) = 4n . $$

Finally, we give examples of $c$-symplectic but not symplectic manifolds to which our Theorem applies.

\begin{prop}\label{PropDegreeOne}
Let $(M,\omega)$ be a $c$-symplectically atoroidal manifold (with its induced orientation), and let $g:N\to M$ be a map of nonzero degree. Then the pair $(N,g^*\omega)$ is $c$-symplectically atoroidal.
\end{prop}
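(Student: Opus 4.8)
The plan is to verify the two defining conditions of $c$-symplectic atoroidality for $(N,g^*\omega)$ directly, using naturality of pullback and the behaviour of top-degree cohomology under maps of nonzero degree. Since the degree and the de Rham class $[g^*\omega]$ depend only on the homotopy class of $g$, and since a nonzero degree map between closed manifolds forces $\dim N=\dim M=:2n$, we may as well assume $g:N\to M$ is smooth, so that $g^*\omega\in\Omega^2(N)$ is an honest closed $2$-form; changing $g$ within its homotopy class alters this form only up to addition of an exact $2$-form, so the resulting $c$-symplectic structure on $N$ is well-defined up to the natural equivalence.

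First I would check that $(N,g^*\omega)$ is $c$-symplectic. The form $g^*\omega$ is closed, and by naturality of cup product its cohomology class satisfies $[g^*\omega]^n=g^*([\omega]^n)\in H^{2n}(N;\RR)$. Pairing against the fundamental class of $N$ and using $g_*[N]=\deg(g)\cdot[M]$ gives
$$\big\langle g^*([\omega]^n),\,[N]\big\rangle=\deg(g)\cdot\big\langle[\omega]^n,\,[M]\big\rangle,$$
which is nonzero because $\deg(g)\neq 0$ and $(M,\omega)$ is $c$-symplectic. Hence $[g^*\omega]^n\neq 0$, so $N$, being a closed manifold of even dimension $2n$ carrying a closed $2$-form with nonvanishing $n$-th power, is $c$-symplectic with the stated structure.

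Next I would verify atoroidality. Let $f:T^2\to M'=N$ be an arbitrary smooth map; then $g\circ f:T^2\to M$ is smooth, and $\int_{T^2}f^*(g^*\omega)=\int_{T^2}(g\circ f)^*\omega=0$ since $(M,\omega)$ is $c$-symplectically atoroidal. As $f$ was arbitrary, $(N,g^*\omega)$ is atoroidal, and combined with the previous paragraph this proves it is $c$-symplectically atoroidal. I do not expect any substantive obstacle here: the only point requiring a word of care is the initial reduction to a smooth representative of $g$ (so that the pullback form makes sense), after which the argument is entirely formal, consisting of naturality of pullback together with the standard fact that a degree-$d$ map multiplies the top cohomology pairing by $d$.
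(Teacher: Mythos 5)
Your proposal is correct and follows essentially the same route as the paper: the $c$-symplectic condition for $(N,g^*\omega)$ is obtained from the fact that a nonzero-degree map is injective on top real cohomology (which you spell out via the pairing $g_*[N]=\deg(g)\,[M]$), and atoroidality follows from the identical computation $\int_{T^2}f^*g^*\omega=\int_{T^2}(g\circ f)^*\omega=0$. The extra remarks about choosing a smooth representative of $g$ are a reasonable precaution but do not change the substance of the argument.
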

\begin{proof}
Since $g$ induces an isomorphism on top cohomology, it follows that the pair $(N,g^*\omega)$ is a $c$-symplectic manifold. If $f:T^2\to N$ is a smooth map, then
\[
\int_{T^2}f^*g^*\omega = \int_{T^2}(g\circ f)^*\omega = 0,
\]
since $\omega$ is atoroidal. Hence $g^*\omega$ is atoroidal.
\end{proof}

\begin{example}
Let $(M,\omega)$ be a $c$-symplectically atoroidal manifold, and let $g:\tilde M\to M$ be a branched covering of nonzero degree. Then $(\tilde M,g^*\omega)$ is $c$-symplectically atoroidal, and so $\TC(\tilde M)=2 \dim \tilde M$. Note that $g^*\omega$ fails to be symplectic over the branch locus. We do not know if $\tilde M$ necessarily carries a symplectic structure.
\end{example}

\begin{example}
Let $(M,\omega)$ be a closed symplectically atoroidal $4$-manifold, and let $X$ be any almost complex $4$-manifold (with the induced orientation). As shown in \cite[Proposition 1.3.1]{Audin}, the connected sum $M\# X$ does not possess any almost complex structure, and hence cannot possess any symplectic structure. However, there is a degree-one map $g:M\# X\to M$ collapsing $X$ to a point, which by Proposition \ref{PropDegreeOne} implies that $M\# X$ is $c$-symplectically atoroidal. Hence $\TC(M\# X)=8$.
\end{example}

 \bibliography{TC}
 \bibliographystyle{amsalpha}

\end{document}